\numberwithin{equation}{section}
\newtheorem{theorem}{Theorem}[section]
\newtheorem{lem}{Lemma}[section]
\newtheorem{pro}{Proposition}[section]
\newtheorem{cor}{Corollary}[section]
\newtheorem{rem}{Remark}[section]
\newtheorem{rems}{Remarks}[section]
\newtheorem{ex}{Example}[section]
\newtheorem{defi}{Definition}[section]
\newtheorem{hyp}{Assumption}[section]
\newcommand{\bt}{\begin{theorem}}
\newcommand{\et}{\end{theorem}}
\newcommand{\bl}{\begin{lem}}
\newcommand{\el}{\end{lem}}
\newcommand{\bp}{\begin{pro}}
\newcommand{\ep}{\end{pro}}
\newcommand{\bcor}{\begin{cor}}
\newcommand{\ecor}{\end{cor}}
\newcommand{\bd}{\begin{defi} \rm }
\newcommand{\ed}{\end{defi}}
\newcommand{\brem }{\begin{rem} \rm }
\newcommand{\erem }{\end{rem}}
\newcommand{\brems }{\begin{rems} \rm }
\newcommand{\erems }{\end{rems}}
\newcommand{\bhyp }{\begin{hyp} \rm }
\newcommand{\ehyp }{\end{hyp}}
\newcommand{\bex}{\begin{ex} \rm }
\newcommand{\eex}{\end{ex}}
\newcommand{\wh}{\widehat}
\newcommand{\I}{\mathds{1}}
\newcommand{\llb}{\llbracket}
\newcommand{\esssup}{\operatornamewithlimits{ess\,sup}}
\newcommand{\essinf}{\operatornamewithlimits{ess\,inf}}
\newcommand{\cH}{\mathcal{H}}
\newcommand{\cS}{\mathcal{S}}
\newcommand{\cE}{{\mathcal E}}
\newcommand{\cF}{{\mathcal F}}
\newcommand{\cM}{{\mathcal M}}
\newcommand{\cT}{{\mathcal T}}
\newcommand{\FF}{{\mathbb F}}
\newcommand{\RR}{{\mathbb R}}
\newcommand{\PP}{{\mathbb P}}
\newcommand{\NN}{{\mathbb N}}
\newcommand{\EE}{{\mathbb E}}
\title{{\Large \bf  \uppercase{An Entropy Regularized BSDE Approach to Bermudan Options and Games}}\vskip35pt}
\author{Daniel Chee$\,^{a}$, Noufel Frikha$\,^{b}$ and Libo Li$\,^{a}$ \\ \\ \\ \\
\\ $^{a\,}$School of Mathematics and Statistics, University of New South Wales \\ Sydney, NSW 2052, Australia \\ \\
$^{b\,}$Universit\'e Paris 1 Panth\' eon-Sorbonne, Centre d’Economie de la Sorbonne, \\106 Boulevard de l’H\^opital, 75642 Paris Cedex 13, France\\ }
\begin{document}

\maketitle
\vskip20pt
\begin{abstract}

In this paper, we investigate optimal stopping problems in a continuous-time framework where only a discrete set of stopping dates is admissible, corresponding to the Bermudan option, within the so-called exploratory formulation. We introduce an associated control problem for the value function, represented as a non-c\`adl\`ag reflected backward stochastic differential equation (RBSDE) with an entropy regulariser that promotes exploration, and we establish existence and uniqueness results for this entropy-regularised RBSDE. We then compare the entropy-regularised RBSDE with the theoretical value of a Bermudan option and propose a reinforcement learning algorithm based on a policy improvement scheme, for which we prove both monotone improvement and convergence. This methodology is further extended to Bermudan game options, where we obtain analogous results. Finally, drawing on the preceding analysis, we present two numerical approximation schemes - a BSDE solver based on a temporal-difference scheme and neural networks and the policy improvement algorithm - to illustrate the feasibility and effectiveness of our approach.
\end{abstract}

\newpage
\tableofcontents
\newpage
\section{Introduction}

Optimal stopping problems play a central role in probability, statistics, and financial mathematics. Broadly speaking, such problems consist of choosing the most favourable time to stop a stochastic process in order to maximise a given expected reward. Classical examples include sequential testing in statistics (see, e.g., \cite{Wald,ShiryaevOS}), queueing and inventory models in operations research, and the pricing of American or Bermudan options in finance (see \cite{KS1998,P2009}). Formally, for some given positive reward process $P$ defined on a standard filtered probability space $(\Omega, \cF, \PP, \mathbb{F})$ where the filtration $\FF = (\cF_t)_{t\geq 0}$ is assumed to satisfy the usual conditions, the constrained optimal stopping problem and constrained optimal stopping games, see Kifer \cite{K2013, K2000}, is to compute
\begin{align}
V_t & =  \esssup_{\tau \in \mathcal{T}_{t,T}(S)} \mathbb{E}[P_\tau|\cF_t], \quad \mbox{ and } \quad  
V_t = \esssup_{\tau \in \mathcal{T}_{t,T}(S)} \essinf_{\sigma \in \mathcal{T}_{t,T}(S)} 
 \mathbb{E}[P_{\tau\wedge \sigma}|\cF_t], \label{ProblemIntro}
\end{align}

\noindent where, for a set $S\subseteq [0,T]$, $\cT_{t,T}(S)$ stands for the set of all $\FF$-stopping times taking values in $(S \cup \{T\}) \cap [t,T]$.  In the case where $S$ is a discrete set, we refer to the constrained optimal stopping problem in \eqref{ProblemIntro} as the {\it Bermudan (game) option}, and if $S = [0,T]$ we refer \eqref{ProblemIntro} as the {\it American (game) option}.

Traditional approaches to optimal stopping typically rely on dynamic programming and variational inequalities \cite{bensoussan1982applications,KS1998}, or on partial differential equations (PDEs) when the underlying dynamics are Markov processes \cite{fleming2006controlled,P2009}. While mathematically powerful, these methods require full knowledge of the underlying stochastic model and its reward structure, an assumption that is rarely satisfied in practice.

In parallel, reinforcement learning (RL) has emerged as a powerful framework for sequential decision-making under uncertainty \cite{bertsekas1996neuro,SB}. By learning from data through exploration and exploitation, RL can design near-optimal strategies without assuming explicit knowledge of the system dynamics. In continuous-time settings, entropy regularisation has proven particularly effective for balancing exploration and exploitation, leading to mathematically tractable formulations and provably stable algorithms \cite{haarnoja2017reinforcement,geist2019theory}.

Beyond these theoretical advances, a large body of research has focused on the numerical resolution of optimal stopping problems. Monte Carlo and machine learning approaches have been widely investigated, see for instance \cite{AKL2025, LL2021,BCJ2019,BCJW2021,BCJ2020,R2010,SZH2013,FMD2022,GLW2023,L2018,BG2004,LS2021,FD2021,R2002,RST2022,STG2023,DD2024,DSXZ2024,D2023,DFX2024}. Notable examples include the deep-learning algorithms of Becker, Cheridito, and Jentzen for pricing American-style options \cite{BCJ2019,BCJ2020}, the dual representation methods of Reppen, Soner, and Tan \cite{RST2022}, and the numerical schemes recently proposed by Soner and Tissot-Daguette \cite{STG2023}. More recently, motivated by RL and entropy regularisation, Dai and Dong \cite{DD2024}, Dai \emph{et al.}\ \cite{DSXZ2024}, Dong \cite{D2023}, and Dianetti \emph{et al.}\ \cite{DFX2024} have explored randomised stopping formulations.

The key idea underlying this line of work is the randomised stopping representation of optimal stopping problems (see Gyöngy and Šiška \cite{GS2008}). Rather than making a sharp \emph{stop-or-continue} decision, one introduces a stopping probability, thereby recasting the problem as a stochastic control problem in which the control variable is the instantaneous probability of stopping conditional on continuation. This perspective not only provides new analytical tools but also creates a natural bridge to modern RL algorithms, particularly in the face of sparse terminal rewards. Yet this representation comes with two important challenges. First, in the context of American options, the optimal stopping rule in discrete time (respectively continuous time) is inherently binary: the stopping probability takes values $0$ or $1$ (the stopping intensity takes values (the stopping intensity takes values $0$ or $\infty$). Second, this lack of smoothness is problematic both conceptually and numerically: (i) modern machine learning methods typically require differentiability of the control with respect to neural network parameters to exploit gradient descent, and (ii) discontinuous controls make numerical schemes more fragile and more sensitive to model misspecification.

To overcome these difficulties, recent research has explored alternative formulations and learning-based approaches. To address the first problem, Becker \emph{et al.} \cite{BCJ2019} proposed a two-step procedure. First, they introduce an intermediate neural network to parameterize the probability of stopping (the control), allowing gradient descent to be applied to the value function with respect to the neural network parameters. Then, to recover a candidate for the optimal control, the neural network output is passed through a function that maps it to either $0$ or $1$. In contrast, Reppen \emph{et al.} \cite{RST2022} and Soner and Tissot-Daguette \cite{STG2023} parameterise the exercise boundary itself using neural networks and smooth the stopping probability by expressing it as a smooth function of the distance between the value function and the learned boundary. This ensures that the control remains smooth in the neural network parameters.

More recently, Dong \cite{D2023}, Dai and Dong \cite{DD2024}, Dai \emph{et al} \cite{DSXZ2024}, and Dianetti \emph{et al.} \cite{DFX2024} have developed a continuous-time, diffusion-based framework using a PDE approach built upon the exploratory HJB equation of \cite{TZZ2021, WZZ2020}. By introducing entropy regularisation into the problem, they encourage exploration of the (partially unknown) environment, which in turn smooths the stopping intensity. A notable advantage of this method is that the regularisation strength, governed by the so-called temperature parameter $\lambda$, can be tuned to balance exploration and exploitation, thereby accelerating the learning process.

The present paper is, in spirit, closely related to the works of Dong \cite{D2023}, Dai and Dong \cite{DD2024}, Dai \emph{et al.} \cite{DSXZ2024}, and Dianetti \emph{et al.} \cite{DFX2024}, in that we likewise introduce entropy regularisation and develop reinforcement learning algorithms for optimal stopping problems and stopping games. Unlike these contributions, which adopt a PDE-based perspective and rely heavily on the structural properties of the underlying diffusion model, our approach follows a purely probabilistic route, closer in spirit to Becker \emph{et al.} \cite{BCJ2019}, and imposes little to no assumptions on the model dynamics. Furthermore, while the aforementioned studies focus mainly on American options, in either discrete or continuous time, our analysis centres on Bermudan options in continuous time. The extension of the present work to the American case is the subject of ongoing research \cite{CFL2025}.

Our approach builds upon the Doob-Meyer-Mertens decomposition of the value function associated with the optimal stopping problem \eqref{ProblemIntro}. This naturally leads us to a formulation in terms of backward stochastic differential equations (BSDEs), reflected BSDEs (RBSDEs), and double reflected BSDEs (DRBSDEs). Unlike \cite{D2023, DD2024, DSXZ2024, DFX2024}, who directly penalise the control (the stopping intensity in the American case or the stopping probability in the Bermudan case), we work at the level of the BSDE representation and introduce a penalised relaxed control problem. And in sharp contrast to Becker \emph{et al.} \cite{BCJ2019}, who parameterise the stopping rule with neural networks and enforce binary decisions via a projection step, our method does not rely on parameterising the control directly. Instead, we regularise the reflection process itself, which yields a formulation that is theoretically anchored in stochastic analysis and that avoids the architectural constraints of neural-network parameterisations.

From a conceptual standpoint, our method can therefore be seen as a way to regularise the reflection process appearing in the RBSDE representation of the optimal stopping problem, rather than the stopping rule. This distinction is crucial: the approach of Becker \emph{et al.} \cite{BCJ2019} is entirely model-free and numerical, but it depends on representing the stopping decision through a neural network, whose architecture must be fixed in advance. By contrast, our framework does not require such a parameterisation. It is grounded in stochastic analysis and BSDE theory, which makes it independent of neural network design choices and adaptable across different settings. A notable advantage of this approach is that it remains entirely probabilistic, so our results are not tied to specific model dynamics, and the numerical algorithm we develop is fully data-driven. Moreover, because our framework rests on the Doob–Meyer–Mertens decomposition and stochastic analysis tools, it can be applied with minimal changes to discrete-time models, continuous-time models, or mixed settings with finitely many admissible stopping times. Finally, the method could be naturally extended to settings with market frictions, or more generally to optimal stopping problems and games under nonlinear expectations, see for instance \cite{EKPPQ1997, GIOQ2020, QS2014}.\\

\noindent \textbf{Organization of the paper.}
\vskip5pt
The paper is organised as follows. Section 2 introduces the notations and stochastic analysis tools that will be used throughout. Section 3 is devoted to the Bermudan option: we introduce the entropy-regularised formulation, analyse the convergence of the associated approximate stopping rules as the regularisation parameter tends to zero, and establish error bounds of order $\lambda \ln(\lambda)$ between the regularised and original values. We also study the convergence of a policy improvement algorithm and connect our approach with the dual representation of Rogers to obtain upper bounds.

In Section 4, we extend this analysis to the Bermudan game option. Following a similar structure, we show that the entropy-regularised values and stopping rules converge to their classical counterparts, and we establish the convergence of the associated policy update algorithm.

Section 5 presents numerical experiments. We implement two algorithms: a BSDE solver based on a temporal-difference scheme, and the policy improvement algorithm introduced earlier. We illustrate that the entropy-regularised formulation provides accurate approximations for small values of $\lambda$ and compare our results with those of Becker \emph{et al.} \cite{BCJ2019} for Bermudan max-call options. Finally, the Appendix gathers auxiliary lemmas and technical estimates.

\section{Tools and Notations} \label{sec:2}
We work on the standard filtered probability space $(\Omega, \cF, \PP, \mathbb{F})$ where the filtration $\FF = (\cF_t)_{t\geq 0}$ is assumed to satisfy the usual conditions and thus all $\FF$-adapted martingales are assumed to be c\`adl\`ag. To place our work within the framework of stochastic calculus and backward stochastic differential equations, we shall make use of stochastic calculus for non-c\`adl\`ag semimartingales introduced in Lenglart \cite{LE} and Gal'c\v uk \cite{G1981}. For the purpose of exposition, we follow the language used in Gal'c\v uk \cite{G1981} and, unless otherwise stated, all stochastic processes in concern are optional semimartingales which are known to exhibit finite left and right limits (l\`agl\`ad). Given any optional semimartingale $X$ we denote by $X_-$ and $X_+$ the left and right limits of $X$, and by convention we set $X_{0-} = 0$. The left and right jumps of $X$ are denoted by $\Delta X= X - X_-$ and $\Delta^+ X = X_+-X$ respectively. Any process of finite variation $V$ can be decomposed into its right continuous part and left continuous part, respectively, by setting $V^g = \sum_{s< \cdot} \Delta^+ V_s$ and $V^r := V - V^g$. The right continuous part $V^r$ can be further decomposed into $V^r = V^c + V^d$, where $V^d = \sum_{s\leq \cdot} \Delta V_s$ and $V^c := V^r - V^d$. This gives us the decomposition $V = V^c + V^d+ V^g$. In addition, suppose $X$ is an optional semimartingale such that $\Delta X > -1$ and $\Delta^+X > -1$, then the optional stochastic exponential of $X$ denoted by $\mathcal{E}(X)$, is given by
\begin{gather*}
\mathcal{E}_t(X) = \exp\left\{ X_{t} - \frac{1}{2}\left< X^c,X^c\right>_{t} \right\}\prod_{0<s\leq t} (1+\Delta X_s)e^{-\Delta X_s}\prod_{0\leq s< t} (1+\Delta^+ X_s)e^{-\Delta^+ X_s}.
\end{gather*}
If $X$ is a c\`adl\`ag increasing process or process of finite variation such that $\Delta X < 1$ then
\begin{align*}
\mathcal{E}_{u,t}(X) = e^{X^c_t}\prod_{u<s\leq t} (1+\Delta X_s) \quad \mathrm{and} \quad \mathcal{E}_{u,t}(X_-) = e^{X^c_t}\prod_{u\leq s < t} (1+\Delta X_s).
\end{align*}

\noindent We now introduce the spaces of processes that are used throughout the paper. We denote by \\
\noindent $\bullet$ $\mathcal{O}(\FF)$ the set of all $\FF$ optional processes,\\
\noindent $\bullet$
 $\mathcal{M}$ the set of all $\FF$ martingales and $\mathcal{M}_{u.i.}$ the set of all $\FF$ uniformly integrable martingales, \\
 \noindent $\bullet$ $\mathcal{A}^+$ the set of all $\FF$ adapted increasing processes of integrable variation, \\
  \noindent $\bullet$ $\mathcal{D}$ the set of all semimartingale which is of class $(D)$. 

 In the rest of this work, we denote by $\mathcal{T}$ the set of $\mathbb{F}$ stopping times and, given that our goal is to study optimal stopping problems where the admissible set of stopping times is only allowed to take values in a finite discrete subset of $[0,T]$. Letting $S = \{t_0, t_1, \dots , t_N \}$, where $t_{N} < T$, we shall denote by $\mathcal{T}(S)$ the set of $\mathbb{F}$ stopping times taking value in $S\cup\{T\}$ and $\mathcal{T}_{t,T}(S)$ the set of $\mathbb{F}$ stopping times taking value in $(S\cup\{T\})\cap [t, T]$. As convention, we set $t_0 = 0$ and $t_{N+1} = T$. The number of remaining exercise dates from time $t>0$ is denoted by the left continuous process $N(t) := (N+1) - \max\{i: t_i < t\}$.  Furthermore, we denote by $\Pi$ the set of conditional probability densities supported on $[0,1]$, or more specifically, we set
\begin{align*}
\Pi = \left\{ \pi = (\pi_{t_i}(u))_{i=1,\dots, N} \, : (u,s) \mapsto \pi_{s}(u) \text{ is } \mathcal{B}([0,1])\otimes \cF_{s} \text{ measurable, } \pi_{t_i}(u) \geq 0 \text{ and } \int^1_0 \pi_{t_i}(u) du = 1\right\}
\end{align*}
and the mean of $\pi_{t_i}$ is denoted by $\mu_{{\pi}_{t_{i}}} := \int_{0}^{1} u {\pi}_{t_{i}}(u) \, du$.
Finally, we write $x\vee y = \max(x,y)$, $x^+ = \max(x,0)$ and $\binom{a}{b} = \frac{a!}{a!(a-b)!}$ for $a >b \in \mathbb{N}_+$.



\section{Bermudan Option}\label{Bermudan}

\subsection{Representation of the value of the Bermudan option}
To motivate our approach, we begin by presenting several equivalent formulations for the valuation of a Bermudan option. These representations prove particularly useful for the introduction of entropy regularisation techniques.
Let $S = \{t_0, t_1, \dots , t_N \}$ be a prescribed, admissible set of exercise dates, with $t_0 =0$ and $t_N < T$. Consider a Bermudan option associated with a non-negative, $\FF$-adapted reward process $P$ satisfying $\max_{u\in S\cup\{T\}} \mathbb{E}[P_{u}] <\infty$. The value process $V$ of the option is then given by the classical optimal stopping representation:
\begin{align}
V_t = \esssup_{\tau \in \mathcal{T}_{t,T}(S)} \mathbb{E}[P_\tau|\cF_t]. \label{Problem}
\end{align}

By the results of El Karoui \cite{EK1981} and Maingueneau \cite{M1978} on optimal stopping, together with Schweizer's analysis of Bermudan options \cite{S2002}, the Doob-Meyer-Mertens decomposition of $V$ takes the form
\begin{align}
V_t & = P_T - (M_T- M_t) + \sum_{t\leq  t_i < T}  (P_{t_i} - V_{t_i+})^+ \label{RBSDEbermuda}
\end{align}
where $M\in \mathcal{M}_{u.i.}$. This representation may be interpreted as a linear RBSDE with lower barrier given by the reward process $P$. 

As noted in the introduction, it is well known that classical optimal stopping problems can be reformulated as so-called randomised stopping problems; see, for instance, Theorem 2.1 Gy\"ongy and \v Si\v ska \cite{GS2008}. This perspective has been fruitfully exploited in the discrete-time setting by Becker \emph{et al.}\@ \cite{BCJ2019} and in the continuous-time case for American options by Dong \cite{D2023}. However, to the best of our knowledge, such representations have not yet been established in the literature for Bermudan options. For the reader’s convenience, and as a foundation for our subsequent developments, we therefore provide such a result here, using tools from stochastic calculus and the theory of BSDEs. The proof is deferred to Appendix \ref{proof:lem:randomized}.

\bl \label{randomized}
The value process $(V_t)_{0\leq t \leq T}$ of the optimal stopping problem  in \eqref{Problem} exhibits a randomized stopping representation given by
\begin{align}
V_t = \esssup_{\Gamma \in \Xi} \mathbb{E}[P_T\mathcal{E}_{t,T}(-\Gamma_-) + \int_{[t,T[} P_s \mathcal{E}_{t,s}(-\Gamma_-) d\Gamma_s|\cF_t],\label{RV}
\end{align}
\noindent where $\Xi$ is given by
\begin{align*}
\Xi = \left\{\Gamma \in \mathcal{A}^+  : \Gamma = \sum_{i=0}^N \Delta \Gamma_{t_i}\I_{\llb t_i,\infty \llb} \quad \mathrm{and}\quad  \Delta \Gamma_{t_i} \in [0,1)\right\}.
\end{align*}

\el

In the credit risk and actuarial science literature, the process $\Gamma$  is commonly referred to as the hazard process, with the associated \emph{Cox} time defined by 
$$
\sigma := \inf\{t\geq 0: 1- \mathcal{E}_t(-\Gamma_-) \geq U\}\wedge T,
$$ 

\noindent where $U \sim \mathrm{Uniform}[0,1]$ is a random variable independent of $\cF_\infty$, and $\mathcal{E}$ denotes the optional stochastic exponential satisfying 
\begin{align*}
\PP(\sigma \geq t |\cF_{t}) = \mathcal{E}_t(-\Gamma_-) = \prod_{0\leq t_i <t} (1-\Delta \Gamma_{t_i}) \qquad \mathrm{and} \qquad  \Delta\Gamma_{t_i} = \frac{\PP(\sigma  = t_i|\cF_{t_i})}{\PP(\sigma  \geq t_i|\cF_{t_i})}.
\end{align*}
Thus, Lemma \ref{randomized} allows us to reinterpret the optimal stopping problem as a randomised stopping problem, in which $\Gamma$ plays the role of a control variable. Unlike the classical binary control framework, where the decision is simply whether or not to stop, the control $\Gamma$ now encodes the conditional probability of stopping, given survival up to the current time.

The RBSDE given in \eqref{RBSDEbermuda} provides a probabilistic representation of the optimal stopping problem. Moreover, the solution pair $(V,M)$ to \eqref{RBSDEbermuda} also satisfies the following generalised BSDE:
\begin{gather}
V_t = P_T - (M_T- M_t) + \esssup_{\Gamma \in \Xi}  \sum_{t\leq  t_i < T}  (P_{t_i} - V_{t_i+})\Delta \Gamma_{t_i}. \label{randomizedBSDE}
\end{gather}
It is immediate that the optimal control $\Gamma^{\star}$ in \eqref{randomizedBSDE} is characterised by the relation 
$$
\Delta \Gamma^*_{t_i} = \I_{\{P_{t_i} > V_{t_i+}\}}, \quad i=1, \cdots, N,
$$ 

\noindent that is, stopping occurs precisely when the immediate payoff exceeds the continuation value. In this sense, the optimal decision rule is again binary: exercise the option whenever $P > V_+ $.

\brem
The work of Becker \emph{et al.} \cite{BCJ2019} builds on the randomized stopping formulation given in \eqref{RV}, and replaces the control process $\Gamma = (\Delta \Gamma_{t_i})_{1\leq i \leq  N}$ with a sequence of neural networks parameterized by $\theta$, denoted $\Gamma^\theta = (\Delta \Gamma^\theta_{t_i})_{1\leq i \leq N}$. A stochastic gradient descent algorithm is then employed to optimise over $\theta$ in order to approximate the solution to \eqref{RV}.
However, as previously discussed, the optimal control $\Gamma^\star$ is composed of indicator functions. Consequently, in \cite{BCJ2019}, the neural network output must be post-processed through a function that maps into $[0,1]$ to recover a valid stopping strategy. \erem

\subsection{Entropy Regularized Bermudan Option}\label{ERBoption}

Starting from the BSDE representation \eqref{randomizedBSDE} of the randomised stopping problem \eqref{RV}, we introduce the following entropy-regularised BSDE, defined by the dynamics

\begin{gather}
V^\lambda_t = P_T - (M^\lambda_T- M^\lambda_t) + \esssup_{\pi\in \Pi}  \sum_{t\leq  t_i < T}  \int^1_0 \Big\{(P_{t_i} - V^\lambda_{t_i+})u \pi_{t_i}(u) - \lambda \pi_{t_i}(u)  \ln(\pi_{t_i}(u) )\Big\} \, du, \label{BermudaV}
\end{gather}

\noindent where $\lambda \geq 0$ is a temperature parameter governing the degree of regularisation.

Notably, when $\lambda = 0$, the BSDE above reduces to the unregularised form \eqref{randomizedBSDE}, and the optimal control $\pi^{\star} \in \Pi$ degenerates into a Dirac measure at either $0$ or $1$. In this setting, the optimal control corresponds to a bang-bang control, where the decision to exercise is binary: either to stop immediately $\pi^\star(u)= \delta_1(u)$ when the payoff exceeds the continuation value, or not at all $\pi^\star(u)= \delta_0(u)$ otherwise. 

This binary nature limits exploration in the state space and leads to non-smooth behaviour in numerical methods. By contrast, the entropy-regularised formulation allows for probabilistic stopping strategies through soft controls, enabling a smoother decision structure that interpolates between stopping and continuation. This softening of the control not only facilitates numerical tractability but also enhances gradient-based learning methods when approximating optimal stopping policies.

We observe that the non-càdlàg BSDE \eqref{BermudaV} admits a unique solution, which can be constructed via backward induction. The key idea is to analyse the equation between successive exercise dates. Specifically, since there are no stopping opportunities in the interval $(t_N, T]$, the martingale representation theorem ensures the existence of a unique solution $(V^\lambda, M^\lambda)$ to \eqref{BermudaV} on this interval, given by
$$
V^\lambda_t= \mathbb{E}[P_T|\cF_t], \quad \mbox{ and }\quad M^\lambda_t := V^\lambda_t - V^\lambda_{t_N+}, \quad t \in (t_N, T],
$$

\noindent where we define $V^\lambda_{t_N+} = \mathbb{E}[P_T|\mathcal{F}_{t_N}]$. At the exercise date $t_N$, the value process satisfies
\begin{align}
V^\lambda_{t_N} = V^\lambda_{t_N+} + \esssup_{\pi\in \Pi}\int^1_0 \Big\{(P_{t_N} - V^\lambda_{t_N+})u \pi_{t_N}(u) - \lambda \pi_{t_N}(u)  \ln(\pi_{t_N}(u) )\Big\} \, du. \label{BermudaVjump}
\end{align}

\noindent The optimal density $\pi^\star_{t_N}$ achieving the supremum above is of Gibbs form and is given explicitly by
\begin{align}
\pi_{t_i}^*(u) = \frac{\frac{1}{\lambda}(P_{t_i}-V^\lambda_{t_i +})}{e^{\frac{1}{\lambda}(P_{t_i}-V^\lambda_{t_i +})}-1} e^{\frac{1}{\lambda}(P_{t_i}-V^\lambda_{t_i +})u}, \qquad u \in [0,1], \quad i=0, \cdots, N. \label{pistar}
\end{align}

Substituting the expression for $\pi^\star$ from \eqref{pistar} into \eqref{BermudaVjump}, we obtain
\begin{align*}
V^\lambda_{t_N} = V^\lambda_{t_N+} + (P_{t_N} - V^\lambda_{t_N+}) \Psi\left(\frac{P_{t_N} - V^\lambda_{t_N+}}{\lambda}\right) =  V^\lambda_{t_N+}  + \lambda \Phi\left(\frac{P_{t_N} - V^\lambda_{t_N+}}{\lambda}\right)
\end{align*}
where the functions $\Psi$ and $\Phi$ are given by
\begin{equation} \label{eq: Psi_Phi_def}
    \Psi\left(x\right) = \frac{1}{x}\ln\left(\frac{e^x-1}{x}\right) \quad \mathrm{and} \quad \Phi(x) = x\Psi(x), \quad x \in \mathbb{R}\backslash\{0\},
\end{equation}
with the conventions $\Psi(0) := \frac{1}{2}$ and $\Phi(0) := 0$. As shown via repeated application of l’Hôpital’s rule, both $\Psi$ and $\Phi$ admit continuous extensions to the entire real line.

Further technical properties of the functions $\Psi$ and $\Phi$, including continuity and growth estimates, are established in Appendix~\ref{sec:technical:results}.

Following the same reasoning, equation \eqref{BermudaV} can be solved recursively on each interval $(t_{i-1}, t_i]$, using the updated terminal condition $V^\lambda_{t_i}$ for each $i = 0, 1, \dots, N$. Full details of this construction are given in the proof of Proposition~\ref{prop: v_lambda_unique}.

\brem 
In the case of an American option, a similar approach can be adopted by setting $d\Gamma_t = \lambda_t \, dt$, where the process $\lambda$ is interpreted as a stopping intensity. In this continuous-time setting, the optional stochastic exponential reduces to the standard exponential, that is, $\mathcal{E}_t(-\Gamma-) = \exp\big(-\int_0^t \lambda_s ds\big)$. This allows us to formulate an analogous relaxed control problem, along with its entropy-regularised version, by replacing the summation over discrete exercise dates in \eqref{BermudaVjump} and \eqref{BermudaV}, respectively, with a time integral.
Heuristically, one may think of the densities $\pi$ as being supported on $\mathbb{R}_+$ rather than on the compact interval $[0,1]$. 
Further details and a rigorous treatment of the continuous-time formulation will be studied in our future work.
\erem

\bp \label{prop: v_lambda_unique}
Suppose that $\max_{u\in S\cup \{T\}}\EE[P_{u}] < \infty$. Then the BSDE 
\begin{gather} 
V^\lambda_t = P_T - (M^\lambda_T- M^\lambda_t) + \sum_{t\leq  t_i < T}  \lambda \Phi\left(\frac{P_{t_i} - V^\lambda_{t_i+}}{\lambda}\right),\label{eq: v_lambda_unique}
\end{gather}

\noindent admits a unique solution $(V^\lambda, M^\lambda)\in \mathcal{D} \times \cM_{u.i.}$.
\ep

\begin{proof}

We divide the proof into several steps.

\noindent \emph{Step 1: Preliminaries and notations}

To simplify notation, for each $j=0, \cdots, N$, define 
$$
S_t^j = t_{j+1} \wedge (t\vee t_{j})
$$ 

\noindent and set the terminal condition as $V^{\lambda,N+1}_{t_{N+1}} := P_{T}$. We construct a backward recursive sequence of processes indexed by $j$, starting from $j=N$, using the following definitions:
\begin{itemize}[leftmargin=15pt]
\item For $t \in (t_{j}, t_{j+1}]$,
\begin{align}
    V_{t}^{\lambda, j} & := \mathbb{E}[V_{t_{j+1}}^{\lambda,j+1}|\cF_{t}]. \label{eq: V^lambda j}
\end{align}
\item At the exercise time $t_j$,
\begin{align}
    V^{\lambda, j}_{t_j} & := V^{\lambda, j}_{t_j+} + (P_{t_j} - V^{\lambda, j}_{t_j+}) \Psi\left(\lambda^{-1}(P_{t_j} - V^{\lambda, j}_{t_j+})\right). \label{eq: V_j^lambda j}
\end{align}
\item The martingale increment is defined as
\begin{align}
    M_{t}^{\lambda, j} & := \EE[V^{\lambda,j+1}_{t_{j+1}}|\cF_{S^j_t}] - \EE[V^{\lambda,j+1}_{t_{j+1}}|\cF_{t_j}]. \label{eq: M^lambda j}
\end{align}
\end{itemize}

\noindent \emph{Step 2: Boundedness and Uniform Integrability.}

Using the assumption $\max_{u\in S\cup \{T\}}\EE[P_{u}] < \infty$ and the bound $|\Psi(x)| \leq 1$, it follows from \eqref{eq: V_j^lambda j} that 
$$
\EE[|V_{t_{j}}^{\lambda, j}|] \leq 2\EE[|V^{\lambda, j}_{t_{j}+}|] + \EE[|P_{t_{j}}|] \leq 2\EE[|V^{\lambda,j+1}_{t_{j+1}}|] + \EE[|P_{t_{j}}|],
$$
\noindent which by induction yields 
$$
\max_{j=0,\dots, N} \EE[V^{\lambda,j}_{t_j}] < \infty.
$$
We then define the global candidate solution $(V^\lambda, M^\lambda)$ as 
\begin{align*}
V^\lambda = V^{\lambda, 0}_0\I_{\{0\}} + \sum_{j=0}^N V^{\lambda, j}\I_{(t_j,t_{j+1}]} \quad \mathrm{and} \quad 
M^\lambda = \sum_{j=0}^N M^{\lambda, j}.
\end{align*}

In view of \eqref{eq: M^lambda j}, each $M^{\lambda, j}$ is the difference of conditional expectations, it is uniformly integrable, and hence so is the sum $M^{\lambda}$. We thus conclude that $M^{\lambda} \in \cM_{u.i.}$. We now show that $V^{\lambda}\in \mathcal{D}$. Let $\tau < \infty$ be any stopping time. Then, 
\begin{align*}
    |V_{\tau}^{\lambda}|
    &\leq V_{0}^{\lambda, 0} + \sum_{j=0}^{N} \EE[V_{t_{j+1}}^{\lambda, j+1} |\cF_{\tau}]\I_{(t_{j}, t_{j+1}]}(\tau) 
    \leq V_{0}^{\lambda, 0} + \sum_{j=0}^{N} \EE[V_{t_{j+1}}^{\lambda, j+1} |\cF_{S_{\tau}^{j}}].
\end{align*}
This shows that the family \{$V_{\tau}^{\lambda}, \tau \in \mathcal{T}\}$ is uniformly integrable, and hence $V^{\lambda} \in \mathcal{D}$. \\

\noindent \emph{Step 3: Verification of the BSDE.} 

It remains to verify that $(V^{\lambda}, M^{\lambda})$ is the unique solution to \eqref{eq: v_lambda_unique}. Fix $k \in \{0, \hdots, N\}$ and $t \in (t_{k}, t_{k+1}]$. We compute
\begin{align*}
    &P_T - (M^\lambda_T- M^\lambda_t) + \sum_{t\leq  t_i < T}  \lambda \Phi\left(\frac{P_{t_i} - V^\lambda_{t_i+}}{\lambda}\right) \\
    &= P_{T} - (M_{T}^{\lambda, N} + \sum_{j=0}^{N-1}M_{T}^{\lambda, j})  + (M_{t}^{\lambda, k} + \sum_{j=0}^{k-1}M_{t}^{\lambda, j}) + \sum_{t\leq  t_i < T}  \lambda \Phi\left(\frac{P_{t_i} - V^\lambda_{t_i+}}{\lambda}\right) \nonumber \\
    &=  V^{\lambda,N}_{t_{N}+} + (V_{t}^{\lambda, k} - V^{\lambda,k}_{t_{k}+})  - \sum_{j = k}^{N-1} \left\{V^{\lambda,j+1}_{t_{j+1}} - V_{t_{j}+}^{\lambda, j}\right\} + \sum_{j=k+1}^{N}  \lambda \Phi\left(\frac{P_{t_j} - V_{t_j+}^{\lambda, j}}{\lambda}\right) \nonumber \\
    &= \sum_{j=k+1}^{N} \left\{V_{t_{j}+}^{\lambda, j} + \lambda \Phi\left(\frac{P_{t_j} - V_{t_j+}^{\lambda, j}}{\lambda}\right) \right\} - \sum_{j=k+1}^{N} V_{t_{j}}^{\lambda, j} + V_{t}^{\lambda, k} = V_{t}^{\lambda}.\nonumber 
\end{align*}
A similar argument applies at $t = t_{0} = 0$. 
Suppose now that there exists another solution $(\tilde{V}^{\lambda}, \tilde{M}^{\lambda}) \in \mathcal{D} \times \mathcal{M}_{u.i.}$ to \eqref{eq: v_lambda_unique}. Then, by the uniqueness of conditional expectation and backward induction, the same recursion must hold. Thus, the pair $(\tilde{V}^{\lambda}, \tilde{M}^{\lambda})$ must coincide with $(V^{\lambda}, M^{\lambda})$, and uniqueness is established.
\end{proof}

\brem \label{rem: monotone_conv}
From Corollary \ref{cor: monotone_driver} and the comparison theorem, it follows that the solutions to \eqref{eq: v_lambda_unique} are monotonically decreasing in the temperature parameter $\lambda$. Consequently, as $\lambda \downarrow 0$ we have the monotone convergence 
$$
V^{\lambda} \uparrow V.
$$
This property will be made quantitative in the next result.
\erem

\begin{theorem}\label{bermudat1}
Let $\lambda < 1$ and $t \in [0,T]$. Then, it holds
\begin{align*}
0\leq V_t - V^\lambda_t \leq  N(t) (1 - \ln(1-e{^{-1}}) + \mathbb{E}[\ln(\overline{V}_{t,T}\vee 1)\,|\,\cF_t]) \left[\lambda - \lambda \ln (\lambda)\right],
\end{align*}
where 
$$
N(t) = (N+1)- \max\{i: t_i < t\}, \quad \overline{V}_{t,T} = \max_{t_i \in S \cap [t,T[} V_{t_i}.
$$
\end{theorem}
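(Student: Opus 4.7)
The lower bound $V_t \geq V^\lambda_t$ is immediate from Remark \ref{rem: monotone_conv}, which itself follows from comparing the drivers $x \mapsto x^+$ and $x \mapsto \lambda \Phi(x/\lambda)$ (one checks that $\lambda \Phi(x/\lambda) \leq x^+$ for all $x$ and all $\lambda > 0$, using $\Psi(y) \leq 1$). For the upper bound, my plan is to propagate the difference $V - V^\lambda$ backwards along the exercise dates: between two consecutive $t_i$'s it is a martingale (by the construction in Proposition \ref{prop: v_lambda_unique}), while at each $t_i$ it picks up a controlled ``jump'' that I will bound via a scalar estimate on the driver mismatch.

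The central step will be a pointwise estimate for $\tilde h(x) := x^+ - \lambda \Phi(x/\lambda) \geq 0$. Using $\Phi(y) = \ln((e^y-1)/y)$ a direct computation gives, for $x \neq 0$,
\[
\tilde h(x) = \lambda \ln\!\left(\frac{|x|/\lambda}{1 - e^{-|x|/\lambda}}\right).
\]
Since $y \mapsto y/(1 - e^{-y})$ is increasing on $\RR_+$ with value $1$ at $0$ and $1/(1-e^{-1})$ at $1$, I would deduce $y/(1-e^{-y}) \leq (y \vee 1)/(1 - e^{-1})$ for $y \geq 0$, so that $\tilde h(x) \leq \lambda \ln(|x| \vee \lambda) - \lambda \ln \lambda - \lambda \ln(1 - e^{-1})$. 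Exploiting $\lambda < 1$ (so that $\ln(|x| \vee \lambda) \leq \ln(|x| \vee 1)$ and $-\ln \lambda > 0$), a short algebraic rearrangement delivers the factorised bound
\[
\tilde h(x) \leq (\lambda - \lambda \ln \lambda)\big(1 - \ln(1-e^{-1}) + \ln(|x| \vee 1)\big),
\]
which is the precise shape demanded by the statement. These monotonicity and growth properties of $\Phi$ are exactly the kind of scalar facts collected in the technical appendix.

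With the scalar estimate in hand I close the argument by backward induction. The comparison $V \geq V^\lambda$ combined with the monotonicity of $\Phi$ shows that at every exercise date
\[
(P_{t_i} - V_{t_i+})^+ - \lambda \Phi\!\left(\tfrac{P_{t_i} - V^\lambda_{t_i+}}{\lambda}\right) \leq \tilde h(P_{t_i} - V_{t_i+}).
\]
Between consecutive exercise dates $V - V^\lambda$ is a martingale, so $V_{t_i+} - V^\lambda_{t_i+} = \EE[V_{t_{i+1}} - V^\lambda_{t_{i+1}} | \cF_{t_i}]$, and on $(t_N, T]$ the two processes coincide (providing the base case). Iterating from $i = N$ downwards would give
\[
V_t - V^\lambda_t \leq \EE\!\left[\sum_{\substack{t_j \in S \\ t \leq t_j < T}} \tilde h(P_{t_j} - V_{t_j+}) \,\Big|\, \cF_t \right].
\]
Since $V_{t_j} = \max(P_{t_j}, V_{t_j+})$ and $P_{t_j}, V_{t_j+} \geq 0$, one has $|P_{t_j} - V_{t_j+}| \leq V_{t_j} \leq \overline{V}_{t,T}$; plugging this into the scalar estimate, extracting the $\cF_t$-measurable factors, and bounding the number of remaining exercise dates by $N(t)$ produces the claimed bound. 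The hard part will be the scalar estimate: locating the factorisation that produces $(\lambda - \lambda \ln \lambda)$ as a clean multiplicative factor rather than as a sum of $\lambda$ and $-\lambda \ln \lambda$ contributions. This is where the assumption $\lambda < 1$ enters in an essential way; everything else is routine BSDE bookkeeping via the comparison and martingale properties above.
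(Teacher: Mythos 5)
Your proposal is correct and follows essentially the same route as the paper: comparison for the lower bound, a pointwise estimate on $x^+ - \lambda\Phi(x/\lambda)$, monotonicity of $\Phi$ to pass from $V^\lambda_{t_i+}$ to $V_{t_i+}$, conditional expectation over the remaining exercise dates, the bound $|P_{t_i}-V_{t_i+}|\leq V_{t_i}$, and the observation $\lambda-\lambda\ln\lambda>\lambda$ for $\lambda<1$ to obtain the factorised form. The only (harmless) difference is that you derive the scalar estimate from the closed form $\tilde h(x)=\lambda\ln\bigl(\tfrac{|x|/\lambda}{1-e^{-|x|/\lambda}}\bigr)$ and the monotonicity of $y\mapsto y/(1-e^{-y})$, which in fact yields a marginally sharper constant than the paper's Lemma \ref{lemma1.1} (it drops the extra additive $\epsilon=\lambda$ term).
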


\begin{proof}

\noindent \emph{Step 1:} By the comparison theorem (or by direct backward induction), we have
$$
V \geq V^\lambda.
$$
From Lemma \ref{lemma1.1}, for $c>0$ and $0<\epsilon < 1$,
\begin{gather}
c\Phi(x/c) \leq x^+ \leq c\Phi(x/c) + c - c\ln(1-e^{-\epsilon/c})+ c[\ln(|x|)]^+ - c\ln(c). \label{eqlem1.1}
\end{gather}
Later, we will set  $c = \epsilon = \lambda$. 
\vskip3pt

\noindent \emph{Step 2: } We write
\begin{equation*}
  V_{t} - V_{t}^{\lambda} = \int_{]t, T]}d(M^{\lambda} - M)_{s} + \sum_{t \leq t_{i} < T}  (P_{t_i} - V_{t_i+})^+ - \lambda\Phi(\lambda^{-1}(P_{t_i} - V^\lambda_{t_i+})). 
\end{equation*}
%
Taking conditional expectations given $\cF_t$, and using the fact that $\Phi$ is decreasing and $V\geq V^\lambda$, yields
\begin{align}
 V_t - V^\lambda_t  & = \mathbb{E}\Big[\sum_{t_i \in [t,T[}  \Big[(P_{t_i} - V_{t_i+})^+ - \lambda\Phi(\lambda^{-1}(P_{t_i} - V^\lambda_{t_i+})) \Big] \, \Big|\, \cF_t\Big] \nonumber \\
 & \leq \mathbb{E}\Big[\sum_{t_i \in [t,T[}  \Big[(P_{t_i} - V_{t_i+})^+ - \lambda\Phi(\lambda^{-1}(P_{t_i} - V_{t_i+})) \Big] \, \Big|\, \cF_t\Big] \nonumber\\
 & \leq \mathbb{E}\Big[ \sum_{t_i \in [t,T[}  \Big[\lambda - \lambda\ln(1-e^{-1})+ \lambda[\ln(|P_{t_i} - V_{t_i+}|)]^+ - \lambda\ln(\lambda) \Big] \Big|\cF_t \Big] \nonumber\\
& \leq N(t) \Big(1 -\ln(1-e^{-1}) + \mathbb{E}\Big[\max_{t_i \in [t,T]} [\ln(V_{t_i})]^+\Big|\cF_t\Big] \Big)
					 \Big[ \lambda  - \lambda\ln(\lambda) \Big],
\end{align}
where we used \eqref{eqlem1.1} together with the fact that $V_{t_i} = \max(P_{t_i}, V_{t_i+})$ and $\lambda - \lambda \ln(\lambda) > \lambda$ for $\lambda < 1$. 
\end{proof}

The previous result quantifies the discrepancy between the entropy-regularised value process $V^\lambda$ and the true value process $V$ in terms of the temperature parameter $\lambda$. In particular, it allows for an immediate uniform bound when the reward process $P$ is bounded, as stated in the following corollary.
\bcor
If the reward process $P$ is bounded, then  
$$\sup_{t\leq T} \,(V_t- V^\lambda_t) \leq  (N+1)(1.5 + 
\ln(\|P\|_\infty\vee 1))[\lambda -\lambda \ln(\lambda)].
$$
\ecor
\subsection{Approximate Stopping Rules}\label{subs:stoppingrule}

In this section, given $V^\lambda$ for a fixed $\lambda \geq 0$, we study an approximate optimal stopping time for \eqref{Problem} and an approximate randomised stopping time for the corresponding randomised stopping problem in \eqref{RV} and their convergence, as $\lambda \downarrow 0$, to the optimal stopping time for the Bermudan option. To this end, we require the following assumption, which is usually satisfied in diffusion-based models.
\bhyp\label{ass:VP}
The set $\bigcup_{j=0}^{N} \{V_{t_j+} = P_{t_j}\}$ is of zero probability.
\ehyp

We first study the approximate optimal stopping time associated with the optimal stopping problem \eqref{Problem}. We recall that from classical results on Snell envelopes (see, e.g., El Karoui \cite{EK1981}), it is well known that for any fixed $t\in [0,T]$, the stopping time 
$$
\tau^*_t 
 = \inf\{t\leq s \in S \cup \{T\}: V_{s} = P_{s}\},
$$ 
\noindent is optimal for the original problem \eqref{Problem}. 
As convention, we set $t_{-1} := -\infty$ and $\prod_{\emptyset} = 1$. Then by using the fact that $\{V = P\} \cap S = \{V_+ \leq P \}\cap S$, we have for $t_{i-1} <  t \leq t_{i}$ and  $i = 0,1,\dots, N+1$,
\begin{align*}
\tau^*_t = \sum^{N+1}_{j=i} t_j \I_{\{V_{t_j+} \leq P_{t_j}\}} \prod^{j-1}_{k=i} \I_{\{V_{t_k+} > P_{t_k}\}}. 
\end{align*}
Next, given the entropy-regularised Bermudan option value $V^\lambda$, it is natural to consider, as an approximation of the optimal stopping time $\tau^*_t$, the stopping time 
$$\hat \tau^\lambda_t := \inf \{t \leq s \in S \cup \{T\} : V^\lambda_{s+} \leq P_{s}\}.
$$

Similarly, from the definition of $\hat \tau^\lambda$, we have for $t_{i-1} <  t \leq t_{i}$ and $i = 0,1,\dots, N+1$
\begin{align*}
\hat \tau^\lambda_t = \sum^{N+1}_{j=i} t_j \I_{\{V^\lambda_{t_j+} \leq P_{t_j}\}} \prod^{j-1}_{k=i} \I_{\{V^\lambda_{t_k+} > P_{t_k}\}}.
\end{align*}
\bp \label{prop: btau_conv}
Under Assumption \ref{ass:VP}, for any $t\geq0$, as $\lambda \downarrow 0$, the stopping time $\hat \tau^\lambda_t$ converges almost surely to the stopping time $\tau^*_t$.
\ep 
\begin{proof}
The result follows from the continuous mapping theorem (Mann-Wald theorem) and Theorem \ref{bermudat1}.
\end{proof}

We now study the approximate optimal randomised stopping time associated with the randomised stopping problem \eqref{RV}. For fixed $\lambda > 0$ and $t\geq 0$, we set
\begin{align} \label{def: randomized_time}
    \tau^\lambda_t:= \inf\{s\geq t : 1- \mathcal{E}_{t,s}(-\Gamma^{\lambda}_-) \geq U\} \wedge T,
\end{align}

\noindent where the hazard process is given by 
    $$
    \Gamma^{\lambda} = \sum_{i=0}^N \Delta \Gamma_{t_i}^{\lambda}\I_{\llb t_i,\infty \llb}, \quad \Delta \Gamma^\lambda_{t_i} = \Psi\left(\lambda^{-1}(P_{t_i} - V^\lambda_{t_i+})\right),
    $$
    the random variable $U \sim \mathrm{Unif}[0,1]$ is independent of $\cF_\infty$ and $\mathcal{E}$ denotes the optional stochastic exponential. To be precise, the randomised stopping time is the non-decreasing process $1- \mathcal{E}(-\Gamma^{\lambda})$.

Intuitively, $ \Delta \Gamma^\lambda_{t_i}$ represents the conditional probability of stopping at time $t_i$, given survival up to $t_i$. In the limit $\lambda \downarrow 0$,  the control becomes a bang–bang rule, taking values in $\left\{0, 1 \right\}$, corresponding to immediate exercise when $P_{t_i} > V_{t_i+}$ and continuation otherwise.


This suggests that the randomised stopping rule $\tau^\lambda_t$ should converge to $\tau^*_t$ almost surely in this limit. To establish this convergence result, we begin by analysing the limiting behaviour of the jump sizes $\Delta \Gamma^\lambda_{t_i}$ of the hazard process as $\lambda \downarrow 0$. This is formalised in the following auxiliary lemma, which plays a key role in proving the main theorem below.


\bl \label{lem: limpsi}
    Under Assumption \ref{ass:VP}, for every $t_{i} \in S$, 
    \begin{gather}
        \lim_{\lambda \rightarrow 0} \Delta \Gamma^\lambda_{t_i} = \I_{\{V_{t_i+} \leq P_{t_i}\}}, \quad a.s. \label{eq: limpsi}
\end{gather}
\el
\begin{proof}
By definition, $\Psi(x) = \frac{1}{x}\Phi(x)$ for all $x \in \RR$. Therefore, Lemma \ref{lem: psi_cdf} guarantees that
\begin{equation} \label{eq: limpsi_eq1}
    \lim_{\lambda \rightarrow 0} \Psi\left(\frac{P_{t_i} - V_{t_i+}}{\lambda}\right) = \lim_{\lambda \rightarrow 0} \frac{\lambda}{P_{t_i} - V_{t_i+}}\Phi\left(\frac{P_{t_i} - V_{t_i+}}{\lambda}\right) = \frac{1}{2}\I_{\{V_{t_i+} = P_{t_i}\}} + \I_{\{V_{t_i+} < P_{t_i}\}} = \I_{\{V_{t_i+} \leq P_{t_i}\}}, \quad a.s.
\end{equation}

\noindent where we used the L'H\^opital's rule and the fact that $\Phi'(0)=1/2$.

By letting $\lambda \rightarrow 0$, it follows from Lemma \ref{lipphi} and Theorem \ref{bermudat1} that
\begin{align*}
    &\frac{\lambda}{P_{t_i} - V^\lambda_{t_i+}}\Phi\left(\frac{P_{t_i} - V^\lambda_{t_i+}}{\lambda}\right) - \frac{\lambda}{P_{t_i} - V_{t_i+}}\Phi\left(\frac{P_{t_i} - V_{t_i+}}{\lambda}\right)\\
    &= \frac{\lambda}{P_{t_i} - V^\lambda_{t_i+}}\left[\Phi\left(\frac{P_{t_i} - V^\lambda_{t_i+}}{\lambda}\right) - \Phi\left(\frac{P_{t_i} - V_{t_i+}}{\lambda}\right)  \right] + \left[\frac{\lambda}{P_{t_i} - V^\lambda_{t_i+}}- \frac{\lambda}{P_{t_i} - V_{t_i+}}\right]\Phi\left(\frac{P_{t_i} - V_{t_i+}}{\lambda}\right),
\end{align*}
converges $a.s.$ to zero which in turn yields \eqref{eq: limpsi}. 
\end{proof}

\bt \label{thm: btau_conv}
Under Assumption \ref{ass:VP}, for any $t\geq0$, as $\lambda \downarrow 0$, the random time $\tau^\lambda_t$ converges almost surely to the stopping time $\tau^*_t$.
\et 

\begin{proof}
We present the argument for $t=0$; the general case $t>0$ follows identically.

\noindent \emph{Step 1:} Define, for $t_i \in S$, 
\begin{equation*}
    A^\lambda_{t_i} := \sum^{i}_{j=0}\mathcal{E}_{t_j}(-\Gamma^\lambda_-)\Delta \Gamma^\lambda_{t_j} = \sum_{j=0}^{i}\Delta \Gamma^\lambda_{t_j}\prod^{j-1}_{k=0} (1-\Delta \Gamma^\lambda_{t_k}).
\end{equation*}

\noindent Recalling \eqref{def: randomized_time}, we can write
\begin{equation} \label{eq: btau_Hlambda}
\tau_{0}^{\lambda}  = \sum_{i=1}^{N} t_i \I_{(A^\lambda_{t_{i-1}},A^\lambda_{t_i}]}(U) + T\I_{(A^\lambda_{t_{N}},1]}(U)= \sum_{i=1}^{N+1} t_i \I_{(A^\lambda_{t_{i-1}},A^\lambda_{t_i}]}(U),
\end{equation}

\noindent where $A^\lambda_{t_{0}} = 0$ and $A^\lambda_{t_{N+1}} = 1$.

\noindent \emph{Step 2:} Observe that Lemma \ref{lem: limpsi} guarantees that $A^\lambda_{t_i} \rightarrow \I_{\{\tau_{0}^* \leq t_i\}}$ a.s. for any $t_{i} \in S$. Hence, as $\lambda \downarrow 0$, $U - A^\lambda_{t_{i}} \rightarrow U - \I_{\{\tau_{0}^* \leq t_i\}}$ a.s. By the continuous mapping theorem (Mann-Wald theorem), 
\begin{equation*}
    \I_{(-\infty,0]}(U - A^\lambda_{t_{i}}) \rightarrow \I_{(-\infty,0]}(U - \I_{\{\tau_{0}^* \leq t_i\}}) = \I_{\{ \tau_{0}^* \leq t_{i}\}} \quad \text{a.s.},
\end{equation*}

\noindent since the set of discontinuity points of the function $x \mapsto \I_{(-\infty, 0]}(x)$ is given by 
\begin{align*}
\{U - \I_{\{\tau_{0}^* \leq t_i\}} = 0\} = (\{U = 0\} \cap \{\tau_{0}^* >  t_i\}) \bigcup\, (\{U = 1\} \cap \{\tau_{0}^* \leq  t_i\}), 
\end{align*}
and has probability zero. Similarly, as $\lambda \downarrow 0$, $\I_{(0,\infty)}(U - A^\lambda_{t_{i-1}}) \rightarrow \I_{\{\tau_{0}^{*} > t_{i-1}\}}$ $a.s.$ Hence, for all $t_{i} \in S$, we conclude that 
\begin{align*}
    \I_{(A^\lambda_{t_{i-1}},A^\lambda_{t_i}]}(U) &= \I_{(0,\infty)}(U - A^\lambda_{t_{i-1}}) \times \I_{(-\infty,0]}(U - A^\lambda_{t_{i}}) \\
    &\longrightarrow  \I_{\{\tau_{0}^{*} > t_{i-1}\}} \times \I_{\{ \tau_{0}^* \leq t_{i}\}} = \I_{\{\tau^* = t_i\}} \quad {a.s.}
    \end{align*}
Repeating the above for $i = N+1$ gives
\begin{equation*}
    \I_{(A^\lambda_{t_{N}},1]}(U) = \I_{(0, \infty)}(U - A_{t_{N}^{\lambda}}) \rightarrow \I_{(0, \infty)}(U - \I_{\{\tau_{0}^{*} \leq t_{N}\}}) = \I_{\{\tau_{0}^{*} > t_{N}\}} = \I_{\{\tau_{0}^{*} = T\}} \quad {a.s.}
\end{equation*}

\noindent \emph{Step 3:} The arguments of the previous step together with the identity \eqref{eq: btau_Hlambda} implies that 
$$
\lim_{\lambda \downarrow 0} \tau_{0}^{\lambda} = \tau_{0}^{*}, \quad a.s.
$$
The argument for $t>0$ is identical.
\end{proof}

\brem
We point out that the approximate optimal stopping time $\wh \tau^{\lambda}$ for \eqref{Problem} is a stopping time, while ${\tau}^\lambda$ for \eqref{RV} is not a stopping time. 
\erem

\subsection{Policy Improvement Algorithm}\label{PIA}


In this section, we investigate the associated policy improvement algorithm.
Such algorithms are of particular importance in stochastic control, as they provide a constructive, iterative method for refining a candidate policy towards optimality, often yielding faster convergence and improved numerical stability compared with direct optimisation.

For a fixed $\lambda > 0$ and a given conditional probability density process $\pi = (\pi_{t_i})_{1\leq i \leq N} \in \Pi$, we define
\begin{align*}
G(t_i, x, \pi_{t_i}) & := \int^1_0 \Big\{(P_{t_i} - x)u \pi_{t_i}(u) - \lambda \pi_{t_i}(u)  \ln(\pi_{t_i}(u) )\Big\}du,\\
\pi^{*}_{t_i}(x,u) & : = \textrm{argmax}_{\pi} G(t_i, x, \pi_{t_i}(u)) = \frac{\frac{1}{\lambda}(P_{t_{i}}-x)}{e^{\frac{1}{\lambda}(P_{t_{i}}-x)}-1} e^{\frac{1}{\lambda}(P_{t_{i}}-x)u},\quad  u \in [0,1].
\end{align*}

Given an initial policy $\pi^0 \in \Pi$ and the initial value function $V^{\lambda,0}_t := \mathbb{E}[P_T |\mathcal{F}_t]$ for $t \in [0,T]$, , we proceed iteratively. At step $n$, assuming the current data $(\pi^n, V^{\lambda,n}) \in \Pi\times \mathcal{D}$ are known, the policy update step is
\begin{align}
\pi^{n+1}_{t_i}(u) & : = \pi^{*}_{t_i}(V^{\lambda, n}_{t_i+},u)  = \frac{\frac{1}{\lambda}(P_{t_{i}}-V^{\lambda,n}_{t_{i}+})}{e^{\frac{1}{\lambda}(P_{t_{i}}-V^{\lambda,n}_{t_i+})}-1} e^{\frac{1}{\lambda}(P_{t_{i}}-V^{\lambda,n}_{t_i+})u}, \quad u \in [0,1], \label{eq: pin1}
\end{align}

\noindent followed by the policy evaluation step
\begin{equation} \label{eq: b_Vlamb_n}
    V^{\lambda,n+1}_t  = P_T - (M^{\lambda,n+1}_T - M^{\lambda,n+1}_t) + \sum_{t\leq t_i< T} G(t_i, V^{\lambda,n+1}_{t_i+} , \pi^{n+1}_{t_i}), 
\end{equation}

\noindent where 
\begin{equation} \label{eq: Gpi_def}
    G(t_{i}, V_{t_{i}+}^{\lambda, n+1}, \pi_{t_{i}}^{n+1}) = \lambda \Phi\left(\frac{P_{t_{i}} - V_{t_{i}+}^{\lambda, n}}{\lambda}\right) + (V_{t_{i}+}^{\lambda, n} - V_{t_{i}+}^{\lambda, n+1})\times \mu_{\pi_{t_{i}}^{n+1}}.  
\end{equation}

It is immediate from \eqref{eq: pin1} that $\pi^{n+1} \in \Pi$.
Moreover, by the comparison theorem for BSDEs, $V^{\lambda,n+1} \geq V^{\lambda,n}$ for all $n \geq 1$, showing that each iteration produces a non-decreasing sequence of value functions.
The next result formalises the well-posedness of the policy evaluation step.

\bp \label{prop: b_Vlam_n_integ} 
Given any initial data $(\pi^0, V^{\lambda,0}) \in \Pi\times \mathcal{D}$, the GBSDE defined by \eqref{eq: b_Vlamb_n} admits a unique solution $(V^{\lambda, n+1}, M^{\lambda, n+1}) \in \mathcal{D} \times \mathcal{M}_{u.i.}$ for all $n\geq 0$.
\ep
\begin{proof}
The proof is split into two steps 
\vskip1pt
\noindent \emph{Step 1:} The proof proceeds along the same lines as that of Proposition \ref{prop: v_lambda_unique}, adapted to the iterative structure of the policy improvement algorithm.
We define, backwards in time, the following sequence of processes:
\begin{align}
    V_{t}^{\lambda, n+1, j} & := \mathbb{E}[V_{t_{j+1}}^{\lambda, n+1, j+1}|\cF_{t}], \quad \text{for } t \in (t_{j}, t_{j+1}], \label{eq: V^lambda n j} \\
    V^{\lambda, n+1, j}_{t_j} & := V^{\lambda, n+1, j}_{t_j+} + G(t_{j}, V_{t_{j}+}^{\lambda, n+1, j}, \pi_{t_{j}}^{n+1}), \label{eq: V_j^lambda n j}\\
    M_{t}^{\lambda, n+1, j} & := \EE[V^{\lambda, n+1, j+1}_{t_{j+1}}|\cF_{S^j_t}] - \EE[V^{\lambda, n+1, j+1}_{t_{j+1}}|\cF_{t_j}]. \label{eq: M^lambda n j}
\end{align}
for $n \in \mathbb{N}$ and $j \in {0, \dots, N}$, where $S_t^j := t_{j+1} \wedge (t \vee t_{j})$.
We set $V_{t_{j}}^{\lambda, 0, j} := V_{t_{j}}^{\lambda, 0}$ and $V_{t_{N+1}}^{\lambda, n+1, N+1} := P_{T}$.

\noindent \emph{Step 2:} We then propose the following candidate solution:
\begin{align*}
    V^{\lambda, n+1} = V^{\lambda, n+1, 0}_0\I_{\{0\}} + \sum_{j=0}^N V^{\lambda, n+1, j}\I_{(t_j,t_{j+1}]} \quad \mathrm{and} \quad 
M^{\lambda, n+1} = \sum_{j=0}^N M^{\lambda, n+1, j}.
\end{align*}

We prove that $(V^{\lambda, n+1}, M^{\lambda, n+1}) \in \mathcal{D} \times \cM_{u.i.}$ for all $n \in \mathbb{N}$ by induction.
Since $V^{\lambda, 0} \in \mathcal{D}$, $\max_{u\in S\cup \{T\}}\EE[P_{u}] < \infty$, $|\Psi(x)| \leq 1$ and $\mu_{\pi^{1}} \leq 1$, from \eqref{eq: Gpi_def} and \eqref{eq: V_j^lambda n j}, it follows from \eqref{eq: Gpi_def} and \eqref{eq: V_j^lambda n j} that 
$$
\EE[|V_{t_{j}}^{\lambda, 1, j}|] \leq 2\EE[|V_{t_{j}+}^{\lambda, 1, j}|] + \EE[|P_{t_{j}}|] + 2\EE[|V_{t_{j}+}^{\lambda, 0, j}|]
$$

\noindent and therefore $\max_{j=0,\dots, N} \EE[|V^{\lambda, 1, j}_{t_j}|] < \infty$. In the same manner as in Proposition \ref{prop: v_lambda_unique}, we conclude that $(V^{\lambda, 1}, M^{\lambda, 1}) \in \mathcal{D} \times \cM_{u.i.}$ is the unique solution to \eqref{eq: b_Vlamb_n} for $n=0$.

Assume that for some $k \in \mathbb{N}$, $(V^{\lambda, k+1}, M^{\lambda, k+1}) \in \mathcal{D} \times \cM_{u.i.}$ solves \eqref{eq: b_Vlamb_n}.
Substituting $V^{\lambda, k+1}$ in place of $V^{\lambda, 0, j}$ in \eqref{eq: V^lambda n j}–\eqref{eq: M^lambda n j}, we obtain $V_{t_{j}}^{\lambda, k+2, j} \in L^{1}$ for all $j \in \{0, \dots, N+1\}$.

By repeating the arguments used in Proposition \ref{prop: v_lambda_unique}, we conclude that $(V^{\lambda, k+2}, M^{\lambda, k+2}) \in \mathcal{D} \times \cM_{u.i.}$ is the unique solution to \eqref{eq: b_Vlamb_n} for $n=k$.
The result follows by induction.
\end{proof}

\begin{rem}
The above backward construction is not only a proof device for establishing well-posedness; it also mirrors the dynamic programming principle underlying the policy improvement algorithm.
Each iteration can be interpreted as a refinement of the continuation value through a backward induction scheme, with the policy update step ensuring monotonic improvement of the value function.
This structural correspondence between the proof and the algorithm provides further intuition for the stability and convergence properties of the method.
\end{rem}

We now present the principal convergence result of this section. It establishes that, after a finite number of iterations of the policy improvement algorithm defined in \eqref{eq: pin1}-\eqref{eq: b_Vlamb_n}, the sequence of value functions stabilises to the true solution $V^{\lambda}$, and the associated policies coincide with the optimal control $\pi^*$ given in \eqref{pistar}. This finite convergence property highlights the efficiency of the algorithm: in contrast to general iterative schemes, which may require infinitely many steps to achieve convergence, here optimality is attained exactly after a bounded number of updates determined solely by the number of remaining exercise opportunities. 

\bt  \label{thm: policy_convergence}
For fixed $\lambda \geq 0$ and $t \in [0,T)$, $V_{t}^{\lambda,n} = V_{t}^\lambda$ almost surely for $n \geq N(t_{+})-1$.
\begin{proof}
Fix $t \in [0,T)$ and consider the increment
\begin{align} \label{eq: policy_convergence_eq1}
	V_{t+}^{\lambda, n+1} - V_{t+}^{\lambda,n} 
					& = -(M_{T}^{\lambda,n+1} - M_{T}^{\lambda,n}) + (M_{t}^{\lambda, n+1} - M_{t}^{\lambda,n}) \nonumber\\
					& \quad + \sum_{t < t_{i} < T} \left \{ G(t_{i}, V_{t_{i}+}^{\lambda, n+1}, \pi_{t_{i}}^{n+1}) - G(t_{i}, V_{t_{i}+}^{\lambda, n}, \pi_{t_{i}}^{n})  \right\}.
\end{align}

 \noindent Since $V^{\lambda,n+1} \geq V^{\lambda,n}$ and $ \Phi$ is non-decreasing, we may write
\begin{align} \label{eq: policy_convergence_eq2}
& G(t_{i}, V_{t_{i}+}^{\lambda, n+1}, \pi_{t_{i}}^{n+1}) - G(t_{i}, V_{t_{i}+}^{\lambda, n}, \pi_{t_{i}}^{n}) \nonumber \\
& = \lambda \Phi\left(\frac{P_{t_{i}} - V_{t_{i}+}^{\lambda, n}}{\lambda}\right) - \lambda \Phi\left( \frac{P_{t_{i}} - V_{t_{i}+}^{\lambda, n-1}}{\lambda}\right)  + (V_{t_{i}+}^{\lambda, n} - V_{t_{i}+}^{\lambda, n+1}) \mu_{\pi_{t_{i}}^{n+1}} - (V_{t_{i}+}^{\lambda, n-1} - V_{t_{i}+}^{\lambda, n}) \mu_{\pi_{t_{i}}^{n}} \nonumber \\
& \leq (V_{t_{i}+}^{\lambda, n} - V_{t_{i}+}^{\lambda, n-1}).
\end{align}

Taking conditional expectations in \eqref{eq: policy_convergence_eq1} and invoking \eqref{eq: policy_convergence_eq2} yields
\begin{equation} \label{eq: v_lambda_policy_diff}
	V_{t+}^{\lambda, n+1} - V_{t+}^{\lambda,n}  \leq \sum_{t < t_i < T} \mathbb{E}[V_{t_{i}+}^{\lambda, n} - V_{t_{i}+}^{\lambda, n-1} | \cF_t].
\end{equation}

Iterating \eqref{eq: v_lambda_policy_diff} gives
\begin{align*}
	0 \leq V_{t+}^{\lambda, n+1} - V_{t+}^{\lambda,n}  & \leq \sum_{t < t_i < T} \sum_{t_i< t^1_i < T} \sum_{t^1_i< t^2_i < T} \dots \sum_{t^{n-2}_i< t^{n-1}_i < T} \mathbb{E}[V_{t^{n-1}_{i}+}^{\lambda, 1} - V_{t^{n-1}_{i}+}^{\lambda, 0} | \cF_t],
\end{align*}
where $t < t_i < t_i^1 < t^2_i < \dots < t^{n-1}_i < T$. If $n \geq N(t+)-1$, the innermost sum is empty (since there is no $t_i^{n-1} \in \mathcal{S}$ strictly before $t_N$) and hence $V_{t+}^{\lambda, n} = V_{t+}^{\lambda, n+1}$ almost surely. Moreover, for such $n$, the driver satisfies
\begin{equation*}
	\sum_{t < t_{i}<T}G(t_{i}, V_{t_{i}+}^{\lambda, n}, \pi_{t_{i}}^{n}) = \sum_{t < t_{i}<T} \lambda \Phi\left(\frac{P_{t_{i}} - V_{t_{i}+}^{\lambda, n-1}}{\lambda}\right) + (V_{t_{i}+}^{\lambda, n-1} - V_{t_{i}+}^{\lambda, n}) \mu_{\pi_{t_{i}}^{n}} = \sum_{t < t_{i}<T} \lambda \Phi\left(\frac{P_{t_{i}} - V_{t_{i}+}^{\lambda, n}}{\lambda}\right),
\end{equation*}
since $V_{t_{i}+}^{\lambda, n-1} = V_{t_{i}+}^{\lambda, n}$ whenever $n \geq N(t_i +)-1 \geq N(t_{i}+)$.By Propositions \ref{prop: v_lambda_unique} and \ref{prop: b_Vlam_n_integ}, together with \eqref{eq: b_Vlamb_n}, this identifies $V_{t+}^{\lambda, n}$ with $ V_{t+}^{\lambda}$. 

Finally, for $t=t_{i} \in S$, we have
$$
V_{t_{i}}^{\lambda, n} = V_{t_{i+}}^{\lambda, n} + G(t_{i}, V_{t_{i}+}^{\lambda, n}, \pi_{t_{i}}^{n}),
$$

\noindent and the above equality for $V_{t_i +}^{\lambda, n}$ implies 
$$
V_{t_{i}}^{\lambda, n} = V_{t_{i}}^{\lambda, n+1} = V_{t_{i}}^{\lambda}
$$ 
\noindent for all $n \geq N(t_{i}+)-1$. 
\end{proof}
\et

As $t \mapsto N(t)$ is decreasing, taking $t=t_{0}$ in Theorem \ref{thm: policy_convergence} shows that $V^{\lambda, n}$ converges to $V^{\lambda}$ on $[0,T]$ for $n \geq N(t_{0+})-1 = N$. Moreover, by uniqueness of the solutions in Theorem \ref{thm: policy_convergence} for $V_{t_{i}+}^{\lambda, n}$ for $t_{i} \in S$, it follows that $\pi^{n} \rightarrow \pi^{*}$. We formalise this in the next corollary.

\bcor \label{cor: policy_convergence}
    For fixed $\lambda \geq 0$, if $n \geq N$ then $V^{\lambda,n} = V^\lambda$ and $\pi^n = \pi^*$ almost surely.
\ecor

\subsection{Upper Estimate of the Bermudan Option}

In this section, we investigate an upper-bound for the Bermudan option value by means of the dual representation of Rogers \cite{R2002}. Specifically, we consider
\begin{align*}
    V_t 
        = \esssup_{\sigma \in \cT_{t,T}} \mathbb{E}[P_\sigma \mid \cF_t] 
        = \inf_{N \in \mathcal{H}^1} \mathbb{E}\left[ \sup_{t \leq \sigma \leq T} \big(P_\sigma - N_\sigma\big) \,\middle|\, \cF_t \right] + N_t,
\end{align*}
where the infimum is taken over all martingales $N \in \cH^{1}$. It is well known that the minimiser coincides with the martingale $M$ in Doob-Meyer-Marten's decomposition of $V$. Proposition \ref{prop: v_lambda_unique} suggests that the martingale $M^{\lambda}$ given in \eqref{eq: v_lambda_unique} may serve as a practical initial approximation to \(M\), thereby yielding the computable upper-bound
\begin{align*}
    U^\lambda_t := \EE[\sup_{t\leq \sigma \leq T} (P_\sigma - M^\lambda_\sigma) \, |\, \cF_t ] + M^\lambda_t.
\end{align*}

Our aim is now to analyse the convergence behaviour of $U^\lambda$ towards $V$ as $\lambda \downarrow 0$.


\bl \label{lem: upper_martingale}
Let $\lambda < 1$ and $t \in [0,T]$. Then,
\begin{align*}
    |M_{t}^{\lambda} - M_{t}| \leq C^{P,V}_{t} \left[\lambda - \lambda \ln (\lambda)\right],
\end{align*}
where 
\begin{align}
    C^{P, V}_{t} &= N(t) (1 - \ln(1-e{^{-1}}) + \mathbb{E}[\ln(\overline{V}_{t,T}\vee 1)\,|\,\cF_t])  + (N+1)(1 - \ln(1-e{^{-1}}) + \mathbb{E}[\ln(\overline{V}_{0,T}\vee 1)])  \nonumber\\
    &\hspace{1em} + (N+1) \left(1 - \ln(1-e^{-1}) + \max_{t_{i} \in [0,T]}[\ln(V_{t_{i}})]^{+}\right) \nonumber\\
    &\hspace{1em }+ \sum_{0 \leq t_{i} < t}  N(t_{i}+) (1 - \ln(1-e{^{-1}}) + \mathbb{E}[\ln(\overline{V}_{t_{i+1},T}\vee 1)\,|\,\cF_{t_{i}}]),  \label{CPV}
\end{align}
with
$$
    N(t) = (N+1) - \max\{i : t_i < t\}, 
    \qquad
    \overline{V}_{t,T} = \max_{t_i \in S \cap [t,T[} V_{t_i}.
$$

\el
\begin{proof}
Subtracting the martingale terms in \eqref{RBSDEbermuda} and \eqref{eq: v_lambda_unique} yields
\begin{equation} \label{eq: upper_M_difference}
    |M_{t} - M_{t}^{\lambda}| \leq |V_{t} - V_{t}^{\lambda}| +|V_{0} - V_{0}^{\lambda}| + \left|\sum_{0 \leq t_{i} < t}(P_{t_{i}} - V_{t_{i}+})^{+} - \lambda \Phi\left(\frac{P_{t_{i}} - V_{t_{i}+}^{\lambda}}{\lambda}\right)\right|.  
\end{equation}
The first two terms on the right-hand side are controlled by Theorem \ref{bermudat1}.  

For the last term, monotonicity of $\Phi$ together with the fact that $V^{\lambda} \leq V$ allows us to bound it via Lemma \ref{lemma1.1}
\begin{align*}
    &\left|\sum_{0 \leq t_{i} < t}(P_{t_{i}} - V_{t_{i}+})^{+} - \lambda \Phi\left(\frac{P_{t_{i}} - V_{t_{i}+}^{\lambda}}{\lambda}\right)\right| \\
    &\leq \sum_{0 \leq t_{i} < t} \max \left\{(P_{t_{i}} - V_{t_{i}+})^{+} - \lambda \Phi\left(\frac{P_{t_{i}} - V_{t_{i}+}}{\lambda}\right), \lambda \Phi\left(\frac{P_{t_{i}} - V_{t_{i}+}^{\lambda}}{\lambda}\right) - \lambda \Phi\left(\frac{P_{t_{i}} - V_{t_{i}+}}{\lambda}\right)\right\} \\
    &\leq \sum_{0 \leq t_{i} < t} \left[\lambda - \lambda\ln(1-e^{-1}) + \lambda[\ln(|P_{t_{i}} - V_{t_{i}+}|)]^{+} - \lambda \ln (\lambda) \right] + \sum_{0 \leq t_{i} < t} |V_{t_{i}+} - V_{t_{i}+}^{\lambda}| \\
    &\leq (N+1) \left(1 - \ln(1-e^{-1}) + \max_{t_{i} \in [0,T]}[\ln(V_{t_{i}})]^{+}\right) \left[\lambda - \lambda \ln (\lambda)\right] \\
    &\hspace{1em }+ \sum_{0 \leq t_{i} < t}  N(t_{i}+) (1 - \ln(1-e{^{-1}}) + \mathbb{E}[\ln(\overline{V}_{t_{i+1},T}\vee 1)\,|\,\cF_{t_{i}}]) \left[\lambda - \lambda \ln (\lambda)\right].
\end{align*}

\end{proof}

\bt
Let $\lambda < 1$ and $t \in [0,T]$. Then, 
\begin{equation*}
    0 \leq U^{\lambda}_t - V_t \leq \left( \EE[\sup_{t \leq \sigma \leq T}C^{P,V}_{\sigma}|\cF_{t}] + C^{P, V}_{t} \right)[\lambda - \lambda \ln (\lambda)],
\end{equation*}

\noindent where the process $C^{P,V}$ is defined in $\eqref{CPV}$.
\et

\begin{proof}    
From the Doob–Meyer decomposition $V = M - A$, we have
\begin{align*}
    \EE[\sup_{t\leq \sigma \leq T} (V_\sigma - M^\lambda_\sigma) \, |\, \cF_t ] &= \EE[\sup_{t\leq \sigma \leq T} (M_\sigma - M^\lambda_\sigma - A_\sigma) \, |\, \cF_t ] \\
    &\leq \EE[\sup_{t\leq \sigma \leq T} (M_\sigma - M^\lambda_\sigma) \, |\, \cF_t ]  +  \EE[\sup_{t\leq \sigma \leq T}  - A_\sigma \, |\, \cF_t ].
\end{align*}

\noindent Since $A$ is increasing, the second term in the above inequality is bounded by $-A_t$. Therefore,
\begin{align*}
    U^\lambda_t &= \EE[\sup_{t\leq \sigma \leq T} (P_\sigma - M^\lambda_\sigma) \, |\, \cF_t ] + M^\lambda_t \\
        &\leq \EE[\sup_{t\leq \sigma \leq T} (V_\sigma - M^\lambda_\sigma) \, |\, \cF_t ] + M^\lambda_t \leq \EE[\sup_{t\leq \sigma \leq T} (M_\sigma - M^\lambda_\sigma) \, |\, \cF_t ]  - A_t + M^\lambda_t.
\end{align*}
Applying Lemma \ref{lem: upper_martingale} completes the proof.
\end{proof}

\section{Bermudan Game Option}\label{BermudanGame}
We now extend our methodology to the valuation of a constrained Dynkin game, see Kifer \cite{K2013}, in which two agents have opposing interests and can each choose when to terminate the game at a finite set of exercise times. Such problems arise naturally in the pricing of financial derivatives with game features, such as convertible bonds or Israeli options, where the holder may exercise early while the issuer retains the right to cancel the contract. Our focus will be on the Bermudan setting, where stopping is allowed only at the discrete set $S = \{t_0,\dots, t_N\}$. More precisely, let $R$ and $P$ be $\FF$-adapted reward processes satisfying 
$$
    R > P \ \textnormal{on } S, \quad R_T = P_T, \quad 
    \max_{u \in S \cup \{T\}} \mathbb{E}[P_u] < \infty, \quad
    \max_{u \in S \cup \{T\}} \mathbb{E}[R_u] < \infty.
$$

The value process is then given by
\begin{gather*}
V_t = \esssup_{\tau \in \mathcal{T}_{t,T}(S)} \essinf_{\sigma \in \mathcal{T}_{t,T}(S)}\EE[P_\tau\I_{\{\tau \leq \sigma\}} + R_\sigma\I_{\{\tau > \sigma\}}\,|\, \cF_t]
\end{gather*}
recalling that $\mathcal{T}_{t,T}(S)$ stands for the set of $S$-valued stopping times taking values in $[t,T]$.

It follows from standard arguments in the literature (see, e.g., Kobylanski \emph{et al.}\@ \cite{KQR2013} or Cvitanic and Karatzas \cite{CK1996}) that $V$ satisfies the following double reflected BSDE (DRBSDE):
\begin{align}
V_t & = P_T  - \int_{]t,T]} dM_s + \sum_{t\leq t_i < T} (P_{t_i} - V_{t_{i+}})^+ - (V_{t_i+} - R_{t_{i}})^+ , \label{DRBSDE}
\end{align}
where $M$ is the martingale component of the Doob–Meyer–Mertens decomposition of $V$.  

At any \(t \in [0,T]\), the optimal strategies for the maximiser and the minimiser are respectively given by 
\begin{equation}\label{optimal:stopping:times:game:option}
    \tau_t = \inf \{ s \geq t : V_s = P_s \}
    \quad \mathrm{and} \quad 
    \sigma_t = \inf \{ s \geq t : V_s = R_s \}.
\end{equation}

In direct analogy with Subsection \ref{ERBoption}, our objective here is to introduce an entropy-regularised version of \eqref{DRBSDE}, analyse the resulting stopping rules, and develop a corresponding policy improvement algorithm.

\subsection{Entropy Regularised Bermudan Game Option and its Stopping Rule}\label{BermudanGame1}
We now extend the entropy regularisation approach developed for the Bermudan option to the game option setting.  
The main challenge is that the value process \eqref{DRBSDE} involves \emph{two} strategic optimisations for each player-leading to a DRBSDE.  

Introducing entropy penalties on both sides not only smooths the optimisation but also yields explicit forms for the randomised stopping rules of the maximiser and minimiser, enabling a unified analysis of convergence as the regularisation parameter vanishes.

In direct analogy with the Bermudan option case, we first rewrite \eqref{DRBSDE} under a control representation
\begin{align*}
V_t & = P_T  - \int_{]t,T]} dM_s + \esssup_{\Gamma^1 \in \Xi}\sum_{t\leq t_i < T} (P_{t_i} - V_{t_{i+}})\Delta \Gamma^1_{t_i} - \esssup_{\Gamma^2 \in \Xi} \sum_{t\leq t_i < T} (V_{t_i+} - R_{t_{i}})\Delta \Gamma^2_{t_i}.
\end{align*}

\noindent Entropy regularisation is introduced by replacing the pointwise suprema with expected rewards penalised by the relative entropy of the control distribution, yielding the BSDE
\begin{equation}\label{bsde:game:option}
\begin{aligned}
V^\lambda_t & = P_T  - \int_{]t,T]} dM^\lambda_s + \esssup_{\underline{\pi}\in \Pi}\sum_{t\leq t_i < T} G(t_i, V^\lambda_{t_i+} , \underline{\pi}_{t_i}) - \esssup_{\overline{\pi}\in \Pi} \sum_{t\leq t_i < T} H(t_i, V^\lambda_{t_i+}, \overline{\pi}_{t_i}),
\end{aligned}
\end{equation}

\noindent where the generator terms $G$ and $H$ are given by
\begin{align*}
G(t_i, x, \pi_{t_i}) & := \int^1_0 \Big\{(P_{t_i} - x)u \pi_{t_i}(u) - \lambda \pi_{t_i}(u)  \ln(\pi_{t_i}(u) )\Big\}du,\\
H(t_i, x, \pi_{t_i}) & := \int^1_0 \Big\{(x-R_{t_i})u \pi_{t_i}(u) - \lambda \pi_{t_i}(u)  \ln(\pi_{t_i}(u) )\Big\}du.
\end{align*}

The corresponding optimal controls in \eqref{bsde:game:option} can be computed in closed form
\begin{align*}
\underline{\pi}^{*}_{t_i}(x,u) & : = \textrm{argmax}_{\pi} G(t_i, x, \pi_{t_i}(u)) = \frac{\frac{1}{\lambda}(P_{t_{i}}-x)}{e^{\frac{1}{\lambda}(P_{t_{i}}-x)}-1} e^{\frac{1}{\lambda}(P_{t_{i}}-x)u} \qquad u\in [0,1],\\
\overline{\pi}^{*}_{t_i}(x,u) & : = \textrm{argmax}_{\pi} H(t_i, x, \pi_{t_i}(u)) = \frac{\frac{1}{\lambda}(x-R_{t_{i}})}{e^{\frac{1}{\lambda}(x-R_{t_{i}})}-1} e^{\frac{1}{\lambda}(x-R_{t_{i}})u} \qquad u\in [0,1].
\end{align*}

Substituting $\underline{\pi}^*$ and $\overline{\pi}^*$ back into \eqref{bsde:game:option}, we obtain the entropy-regularised DRBSDE
\begin{align} \label{eq: v_lambda_game}
V^\lambda_t & = P_T  - \int_{]t,T]} dM^\lambda_s + \sum_{t\leq t_i < T} \lambda \Phi\left(\frac{P_{t_i} - V^\lambda_{t_{i+}}}{\lambda}\right) - \lambda\Phi\left(\frac{V^\lambda_{t_i+} - R_{t_{i}}}{\lambda}\right).
\end{align}

The well-posedness of \eqref{eq: v_lambda_game} follows from similar arguments to those used in Proposition \ref{prop: v_lambda_unique}. The proof of the following result is thus omitted.
\bp \label{prop: v_game_unique}
The BSDE \eqref{eq: v_lambda_game} admits a unique solution $(V^\lambda, M^\lambda)$ in $\mathcal{D} \times \cM_{u,i}$.
\ep

We now state the central result of this section, which quantifies the bias introduced by entropy regularisation. Although it is possible to derive below in Theorem \ref{thm: game_converge} an upper and a lower bound with two different constants, for the sake of symmetry and to preserve a clear financial interpretation, we shall focus on an upper bound for the absolute value expressed in terms of the price of a Bermudan option with reward process $2R$. This formulation highlights the natural way in which entropy regularisation alters the valuation by smoothing the exercise boundary, while still allowing for interpretable financial comparisons.
For the proof, it is convenient to introduce, for a given process $X$, the following hazard processes:
\begin{align} \Gamma^P(X) & := \sum_{i = 1}^N \Psi\left(\frac{P_{t_i} - X_{t_i}}{\lambda}\right)\I_{\llb t_i, \infty\llb} \qquad \text{and} \qquad \Gamma^R(X) := \sum_{i = 1}^N \Psi\left(\frac{X_{t_i} - R_{t_i}}{\lambda}\right)\I_{\llb t_i, \infty\llb}. \label{hazardPR} 
\end{align}

\bt \label{thm: game_converge}
For any $0< \lambda < 1$, we have
\begin{gather*}
|V^\lambda_t - V_t| \leq N(t) \big(1- \ln(1-e^{-1}) + \EE\big[\ln(\overline{V}_{t,T}^{2R}\vee 1) \big|\cF_{t} \big]\big)\left[\lambda  - \lambda \ln(\lambda)\right],
\end{gather*}
where $N(t) = (N + 1) - \max\{i: t_i < t\}$ and $\overline{V}^{2R}_{t,T} = \max_{t_i\in S \cap [t,T[} V^{2R}_{t_i}$, $V^{2R}$ being the price of a Bermudan option with payoff process $2R$. 
\et 

\begin{proof}

The argument proceeds in several steps.\\

\noindent\emph{Step 1: Set up and comparison.}  

For convenience, define $B(s,x) := (P_s - x)^+ - (x- R_s)^+$ and $
B^\lambda(s,x) := \lambda \Phi\left(\lambda^{-1}(P_{s} - x)\right) - \lambda\Phi\left(\lambda^{-1}(x - R_s)\right)$. We also introduce the intermediate drivers $B^{u,\lambda}(s,x) := (P_s - x)^+ - \lambda\Phi\left(\lambda^{-1}(x - R_s)\right)$ and $
B^{l,\lambda}(s,x)  := \lambda \Phi\left(\lambda^{-1}(P_{s} - x)\right) - (x- R_s)^+$ and denote by $V^{u,\lambda}$ and $V^{l,\lambda}$ the BSDE solutions associated with these drivers.

From Lemma \ref{lemma1.1} (see also Figure \ref{fig: driver_B}) we know that $B^{l,\lambda}(s,x)  \leq B(s,x) \leq B^{u,\lambda}(s,x)$ and $B^{l,\lambda}(s,x)   \leq B^\lambda(s,x) \leq B^{u,\lambda}(s,x)$.
By the comparison principle for BSDEs, this yields
$V^{l,\lambda} \leq V \leq V^{u,\lambda}$ and $V^{l,\lambda}  \leq V^\lambda \leq V^{u,\lambda}$. Hence,
\begin{equation}\label{two:sided:bounds:VminusVlambda}
V^{l,\lambda} - V^\lambda \leq V - V^\lambda \leq V^{u,\lambda} - V^\lambda.
\end{equation}

\medskip
\noindent\emph{Step 2: Upper estimate.} 

To obtain an upper estimate, we follow the same strategy as in Theorem \ref{bermudat1}.
Using the monotonicity of $\Phi$ and Lemma \ref{lemma1.1} with $c = \epsilon = \lambda$, we obtain
\begin{align*}
    0 \leq V^{u,\lambda}_t - V^\lambda_t  & =  \mathbb{E}\Big[ \sum_{t \leq t_i < T} B^{u,\lambda}(t_i,V^{u,\lambda}_{t_i+}) - B^\lambda(t_i,V^\lambda_{t_i+})      \, \Big| \,\cF_t \Big]\\
    &\leq \mathbb{E}\Big[ \sum_{t \leq t_i < T} (P_{t_{i}} - V^{u,\lambda}_{t_i+})^+  - \lambda \Phi\left(\lambda^{-1}(P_{t_{i}} - V^{\lambda}_{t_i+})\right)   \, \Big| \,\cF_t \Big]\\
    & \leq  \mathbb{E}\Big[ \sum_{t \leq t_i < T} (P_{t_{i}} - V^{u,\lambda}_{t_i+})^+ - \lambda \Phi\left(\lambda^{-1}(P_{t_{i}} - V^{u,\lambda}_{t_i+})\right)  \, \Big| \,\cF_t \Big]\\
    & \leq \mathbb{E}\Big[ \sum_{t_i \in [t,T[}  \Big[\lambda - \lambda\ln(1-e^{-1})+ \lambda[\ln(|P_{t_i} - V^{u,\lambda}_{t_i+}|)]^+ - \lambda\ln(\lambda) \Big] \Big|\cF_t \Big]. 
\end{align*}

We next compare $V^{u,\lambda}$ to the price of a defaultable Bermudan option with hazard process $\Gamma^R := \Gamma^R(V^{u,\lambda}_+)$, defined in \eqref{hazardPR}. To see this, let $\sigma_t = \inf\{t_i \in S\cap[t,T]: V^{u,\lambda}_{t_i+} \leq P_{t_i}\}$, in the case $\sigma_t < T$, it holds 
\begin{align*}
V^{u,\lambda}_t &= V^{u,\lambda}_{\sigma_t} - \int_{]t,\sigma_t]} dM^{u,\lambda}_s + \sum_{t \leq t_{i} < \sigma_t}(P_{t_{i}} - V_{t_{i}+}^{u, \lambda})^{+} - (V^{u,\lambda}_{t_{i}+} - R_{t_{i}}) \Psi\left(\frac{V^{u,\lambda}_{t_{i}+} - R_{t_{i}}}{\lambda}\right) \\
 &= V^{u,\lambda}_{\sigma_t} - \int_{]t,\sigma_t]} dM^{u,\lambda}_s + \int_{[t,\sigma_t[}(R_s-V^{u,\lambda}_{s+})d\Gamma^R_s.
\end{align*}
We observe that $0\leq V^{u,\lambda}_{\sigma_{t}+} \leq P_{\sigma_{t}} < R_{\sigma_{t}}$ and, at the point $\sigma_t$, we have $V^{u,\lambda}_{\sigma_t} =  V^{u,\lambda}_{\sigma_t+} \vee P_{\sigma_t}  - \lambda \Phi(\lambda^{-1}(V_{\sigma_t+}^{u,\lambda} - R_{\sigma_t}))$ so that, using the fact that $|\Phi'|\leq 1$ and $\Phi(0) = 0$, we deduce that 
\begin{align*}
V^{u,\lambda}_{\sigma_t} & =  V^{u,\lambda}_{\sigma_t+} \vee P_{\sigma_t}  - \lambda \Phi(\lambda^{-1}(V_{\sigma_t+}^{u,\lambda} - R_{\sigma_t}))  \\
&\leq P_{\sigma_t}  + \lambda \left|\Phi(\lambda^{-1}(V_{\sigma_t+}^{u,\lambda} - R_{\sigma_t})) - \Phi(0)\right| \\
&\leq P_{\sigma_t} + |R_{\sigma_t} - V_{\sigma_t+}^{u,\lambda}| \\
&\leq P_{\sigma_t} + R_{\sigma_t}.
\end{align*}
Hence,  by the It\^o-Lengart-Gal'c\v uk formula 
\begin{align*}
\mathcal{E}_t(-\Gamma^R_-) V^{u,\lambda}_t 
& = \mathbb{E}[ V^{u,\lambda}_{\sigma_t}\mathcal{E}_{\sigma_t}(-\Gamma^R_{\sigma_t}) + \int_{[t,\sigma_t[} R_s \mathcal{E}_s(-\Gamma^R_-) d\Gamma^R_s \,|\,\cF_t]\\
& \leq \esssup_{\sigma \in \mathcal{T}_{t,T}(S)}\mathbb{E}[ (R+P)_{\sigma}\mathcal{E}_{\sigma}(-\Gamma^R_{\sigma}) + \int_{[t,\sigma[} (R+P)_s \mathcal{E}_s(-\Gamma^R_-) d\Gamma^R_s \,|\,\cF_t].
\end{align*}

The inequality above shows that $V^{u,\lambda}$ is bounded from above by the price of a defaultable Bermudan option, whose value is necessarily no greater than that of its non-defaultable counterpart. In particular, we have 
$$
V^{u,\lambda}_t \leq \esssup_{\sigma \in \mathcal{T}_{t,T}(S)}\mathbb{E}[ (R+P)_{\sigma} |\,\cF_t] =: V^{R+P}_t.
$$

Moreover, since on the set $S$, the process satisfies $V^{R+P} = (R+P)\vee V^{R+P}_{+}$, we obtain an upper bound that depends on $V^{P+R} \leq V^{2R}$, which in turn is dominated by $V^{2R}$. Hence,
\begin{equation}\label{upper:estimate:diff:values}
    V_{t}^{u, \lambda} - V_{t}^{\lambda} \leq N(t) \Big(1- \lambda \ln(1-e^{-1}) + \EE\big[\max_{t_{i} \in [t, T[} \ln(V_{t_{i}}^{R+P} \vee 1) \big|\cF_{t} \big]\Big)\left[\lambda  - \lambda \ln(\lambda)\right].
\end{equation}

\medskip
\noindent\emph{Step 3: Lower estimate.} 

Similarly, for the lower bound, we have
\begin{equation}\label{first:ineq:lower:estimate}
\begin{aligned}
    0 \geq V^{l,\lambda}_t - V^\lambda_t  & =  \mathbb{E}\Big[ \sum_{t \leq t_i < T} B^{l,\lambda}(t_i,V^{l,\lambda}_{t_i+}) - B^\lambda(t_i,V^\lambda_{t_i+})  \, \Big| \,\cF_t \Big]\\
    & \geq  - \mathbb{E}\Big[ \sum_{t \leq t_i < T} (V^{l,\lambda}_{t_i+}- R_{t_i})^+  - \lambda\Phi\left(\lambda^{-1}(V^\lambda_{t_i+} - R_{t_i})\right)   \, \Big| \,\cF_t \Big] \\
    & \geq  - \mathbb{E}\Big[ \sum_{t \leq t_i < T} (V^{l,\lambda}_{t_i+}- R_{t_i})^+  - \lambda\Phi\left(\lambda^{-1}(V^{l, \lambda}_{t_i+} - R_{t_i})\right)   \, \Big| \,\cF_t \Big] \\
    &\geq - \mathbb{E}\Big[ \sum_{t \leq t_i < T}  \Big[\lambda - \lambda\ln(1-e^{-1})+ \lambda[\ln(|V^{l,\lambda}_{t_i+} - R_{t_{i}}|)]^+ - \lambda\ln(\lambda) \Big] \Big|\cF_t \Big].
\end{aligned}
\end{equation}

We then let $\nu_t = \inf\{t_i \in S\cap[t,T]: V^{l,\lambda}_{t_i+} \geq R_{t_i}\}$ and we use the fact that $|\Phi'|\leq 1$ and $\Phi(0) = 0$ to get 
\begin{align*}
    V_{\nu_{t}}^{l, \lambda} &= V_{\nu_{t}+}^{l, \lambda}\wedge R_{\nu_{t}} + \lambda \Phi\left(\frac{P_{\nu_{t}} - V_{\nu_{t}+}^{l, \lambda}}{\lambda}\right) 
    \leq R_{\nu_{t}} + |P_{\nu_{t}} - R_{\nu_{t}}| \leq 2R_{\nu_{t}}.
\end{align*}
We introduce the hazard process $\Gamma^P: = \Gamma^P(V^{l,\lambda}_+)$, recalling \eqref{hazardPR}, and observe that
\begin{align*}
V^{l,\lambda}_t = V^{l,\lambda}_{\nu_t} - \int_{]t,\nu_t]} dM^{l,\lambda}_s + \int_{[t,\nu_{t}[}(P_s-V^{l,\lambda}_{s+})d\Gamma^P_s.
\end{align*}

The It\^o-Lenglart-Gal'c\v uk formula gives
\begin{align*}
\mathcal{E}_t(-\Gamma^P_-) V^{l,\lambda}_t 
& = \mathbb{E}[ V^{l,\lambda}_{\nu_t}\mathcal{E}_{\nu_t}(-\Gamma^P_{\nu_t}) + \int_{[t,\nu_t[} P_s \mathcal{E}_s(-\Gamma^P_-) d\Gamma^P_s \,|\,\cF_t] \\
& \leq \esssup_{\nu \in \mathcal{T}_{t,T}(S)}\mathbb{E}[ 2R_{\nu}\mathcal{E}_{\nu}(-\Gamma^P_{\nu}) + \int_{[t,\nu[} 2R_s \mathcal{E}_s(-\Gamma^P_-) d\Gamma^P_s \,|\,\cF_t].
\end{align*}

The inequality above shows that $V^{l,\lambda}$ is bounded above by the price of a defaultable Bermudan option whose value is less than its non-defaultable counterpart. In particular, we have $V^{l,\lambda}_t \leq \esssup_{\sigma \in \mathcal{T}_{t,T}(S)}\mathbb{E}[ 2R_{\sigma} |\,\cF_t] =: V^{2R}_t$ and $V^{2R} = (2R)\vee V^{2R}_{+}$ on the set $S$. Hence, coming back to \eqref{first:ineq:lower:estimate}, we conclude that
\begin{equation}\label{lower:estimate:diff:values}
    - N(t) \Big(1 -\ln(1-e^{-1}) + \EE\Big[\max_{t_{i} \in [t, T[} \ln(V_{t_{i}}^{2R}\vee 1) \Big|\cF_{t} \Big]\Big)\left[\lambda - \lambda \ln(\lambda)\right] \leq V_{t}^{l, \lambda} - V^{\lambda}.
\end{equation}

Coming back to \eqref{two:sided:bounds:VminusVlambda} and combining \eqref{upper:estimate:diff:values} with \eqref{lower:estimate:diff:values} completes the proof.
\end{proof}

\begin{figure}[thp]
\begin{center}
\begin{tikzpicture}[scale = 1]
\begin{axis}[
        legend style={
            at={(0.7, 0.98)}, 
            anchor=north west,
            legend columns=1,
            column sep=1ex,
            draw=black, 
            inner sep=1ex, 
            cells={anchor=west}
        },
        xmax=10,ymax=8, samples=65,
    ytick={0}, 
    yticklabels={0},
      xticklabel=\empty,]
    \addplot[blue,thick] {ln( (exp((0-x)) -1)/(0-x) ) - max(x-3,0)};
    \addlegendentry{$B^{l, \lambda}$}
    
    \addplot[black,thick] {max(-x,0) - max(x-3,0)};
    \addlegendentry{$B$}
    
    \addplot[red,thick] {ln( (exp((0-x)) -1)/(0-x) )-ln( (exp((x-3)) -1)/(x-3) )};
    \addlegendentry{$B^{\lambda}$}
    
    \addplot[green,thick] {max(-x,0)-ln( (exp((x-3)) -1)/(x-3) )};
    \addlegendentry{$B^{u, \lambda}$}
    
\addplot[color=black, dotted] coordinates
{(-5, 0) (8, 0)};
\node[] at (axis cs: 0, -2.2) {{\footnotesize $P$}};
\addplot[color=black, dotted] coordinates
{(0, -5) (0, 7)};
\addplot[color=black, dotted] coordinates
{(3, -5) (3, 7)};
\node[] at (axis cs: 3, -2.2) {{\footnotesize $R$}};
\end{axis}
\end{tikzpicture}
\caption{Representative sketch of $B$, $B^\lambda$, $B^{l,\lambda}$ and $B^{u,\lambda}$ with $\lambda=1$.}
\label{fig: driver_B}
\end{center}
\end{figure}
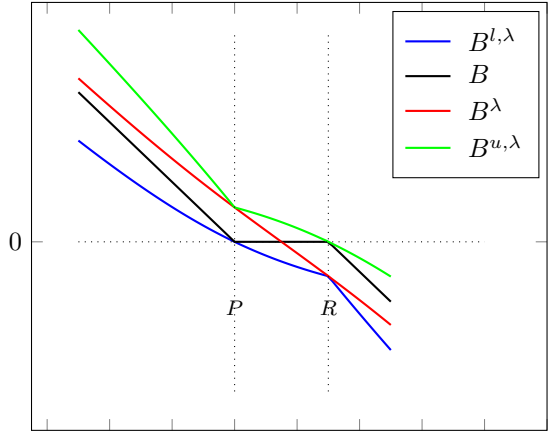

Next, we analyse the randomised stopping rules for the maximiser and minimiser of the game, and establish their convergence to the respective optimal stopping strategies. To this end, let $\lambda > 0$. Recalling from \eqref{eq: v_lambda_game} and \eqref{hazardPR}, we have
\begin{align}
V^\lambda_t & = P_T  - \int_{]t,T]} dM^\lambda_s + \int_{[t,T[} (P_{s} - V^\lambda_{s+})d\Gamma^P_s(V^\lambda_+) +\int_{[t,T[} (R_{s} - V^\lambda_{s+})d\Gamma^R_s(V^\lambda_+).
\end{align}
For convenience, set $\Gamma^{P,\lambda} := \Gamma^P(V^\lambda_+)$ and $\Gamma^{R,\lambda} := \Gamma^R(V^\lambda_+)$. We then define the random times. 
$$
\tau^\lambda_t := \inf\Big\{s \geq t : 1 - \mathcal{E}_{t,s}(-\Gamma^{P,\lambda}_-) \geq U_{1}\Big\} \wedge T,
\quad 
\sigma^\lambda_t := \inf\Big\{s \geq t : 1 - \mathcal{E}_{t,s}(-\Gamma^{R,\lambda}_-) \geq U_{2}\Big\} \wedge T,
$$
\noindent where $U_{1}$ and $U_{2}$ are independent $\mathrm{Uniform}([0,1])$ random variables, taken to be independent of $\cF_{\infty}$. We shall show that under the following assumption, as $\lambda \downarrow 0$, the pair $(\tau^\lambda_t, \sigma^\lambda_t)$ converges to the optimal stopping times $(\tau_t, \sigma_t)$, recalling \eqref{optimal:stopping:times:game:option}.

\bhyp\label{ass:VP2}
The sets $\bigcup_{j=0}^{N} \{V_{t_j+} = P_{t_j}\}$ and $\bigcup_{j=0}^{N} \{V_{t_j+} = R_{t_j}\}$ are of zero probability.
\ehyp

\begin{theorem}\label{stoppingrulegame}
For any $t\geq0$, as $\lambda \downarrow 0$, the pair $(\tau^\lambda_t,\sigma^\lambda_t)$ converges almost surely to $(\tau_t,\sigma_t)$.
\end{theorem}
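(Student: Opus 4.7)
My plan is to extend the argument of Theorem~\ref{thm: btau_conv} from the single-barrier Bermudan setting to the two-barrier game setting, treating each player's randomised stopping time separately and then combining them trivially, since almost-sure convergence of coordinates implies joint almost-sure convergence of the pair.

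First, I would establish the game analogue of Lemma~\ref{lem: limpsi}. By Theorem~\ref{thm: game_converge}, $V^\lambda_{t_i+} \to V_{t_i+}$ a.s.\ for every $t_i \in S$ as $\lambda \downarrow 0$. Writing $\Psi(x) = \Phi(x)/x$ (with the convention $\Psi(0) = 1/2$) and invoking Lemma~\ref{lem: psi_cdf} together with the Lipschitz continuity of $\Phi$ from Lemma~\ref{lipphi} exactly as in the proof of Lemma~\ref{lem: limpsi}, I would conclude
\begin{align*}
\Delta \Gamma^{P,\lambda}_{t_i} \;\longrightarrow\; \I_{\{V_{t_i+} \leq P_{t_i}\}}, \qquad
\Delta \Gamma^{R,\lambda}_{t_i} \;\longrightarrow\; \I_{\{V_{t_i+} \geq R_{t_i}\}}, \qquad \text{a.s.},
\end{align*}
where the boundary null-sets $\{V_{t_i+} = P_{t_i}\}$ and $\{V_{t_i+} = R_{t_i}\}$ are excluded by Assumption~\ref{ass:VP2}.

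Second, I would reinterpret the optimal stopping times \eqref{optimal:stopping:times:game:option} in terms of the right-limit value. From the DRBSDE \eqref{DRBSDE}, on $S$ we have $V_{t_i} = V_{t_i+} + (P_{t_i} - V_{t_i+})^+ - (V_{t_i+} - R_{t_i})^+$, and since $R > P$ on $S$, these two positive parts cannot both be strictly positive. Hence, almost surely under Assumption~\ref{ass:VP2}, $\{V = P\} \cap S = \{V_+ \leq P\} \cap S$ and $\{V = R\} \cap S = \{V_+ \geq R\} \cap S$, so for $t_{i-1} < t \leq t_i$,
\begin{align*}
\tau_t = \sum_{j=i}^{N+1} t_j \I_{\{V_{t_j+} \leq P_{t_j}\}} \prod_{k=i}^{j-1} \I_{\{V_{t_k+} > P_{t_k}\}}, \qquad
\sigma_t = \sum_{j=i}^{N+1} t_j \I_{\{V_{t_j+} \geq R_{t_j}\}} \prod_{k=i}^{j-1} \I_{\{V_{t_k+} < R_{t_k}\}}.
\end{align*}

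Third, I would replicate the Cox-construction argument of Theorem~\ref{thm: btau_conv}. Setting
\begin{align*}
A^{P,\lambda}_{t_i} = \sum_{j=0}^{i} \Delta \Gamma^{P,\lambda}_{t_j} \prod_{k=0}^{j-1}(1 - \Delta \Gamma^{P,\lambda}_{t_k}),
\end{align*}
and analogously $A^{R,\lambda}_{t_i}$, the defining property of the optional stochastic exponential gives $\tau^\lambda_t = \sum_i t_i \I_{(A^{P,\lambda}_{t_{i-1}}, A^{P,\lambda}_{t_i}]}(U_1)$ and the corresponding formula for $\sigma^\lambda_t$ in terms of $U_2$. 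The first step implies $A^{P,\lambda}_{t_i} \to \I_{\{\tau_t \leq t_i\}}$ a.s., and since $U_1$ is independent of $\cF_\infty$ with a diffuse law on $[0,1]$, the discontinuity set $\{U_1 \in \{0,1\}\}$ has probability zero; the continuous mapping (Mann–Wald) theorem then yields $\I_{(A^{P,\lambda}_{t_{i-1}}, A^{P,\lambda}_{t_i}]}(U_1) \to \I_{\{\tau_t = t_i\}}$ a.s., and summing gives $\tau^\lambda_t \to \tau_t$ a.s. The identical argument with $U_2$ and $A^{R,\lambda}$ yields $\sigma^\lambda_t \to \sigma_t$ a.s., and joint convergence of the pair follows immediately.

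The main technical obstacle is the first step: establishing the correct limit of the $R$-hazard jumps, which is delicate because the sign of $V^\lambda_{t_i+} - R_{t_i}$ need not stabilise without control of the speed of convergence $V^\lambda \to V$. The quantitative bound of Theorem~\ref{thm: game_converge}, combined with the boundary-exclusion from Assumption~\ref{ass:VP2} and the Lipschitz estimates on $\Phi$ already developed in the appendix, is precisely what is needed to push the argument through; everything downstream is a routine pairing of two copies of the Bermudan-option argument.
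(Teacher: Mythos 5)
Your proposal is correct and follows essentially the same route as the paper: establish the almost-sure limits of the hazard-process jumps $\Delta\Gamma^{P,\lambda}_{t_i}$ and $\Delta\Gamma^{R,\lambda}_{t_i}$ via Theorem~\ref{thm: game_converge} and the argument of Lemma~\ref{lem: limpsi}, then run the Cox-construction argument of Theorem~\ref{thm: btau_conv} on each coordinate and combine. The paper's proof is a two-line reference to those earlier results; your write-up simply spells out the intermediate verifications (in particular the identities $\{V=P\}\cap S = \{V_+\leq P\}\cap S$ and $\{V=R\}\cap S=\{V_+\geq R\}\cap S$) that the paper leaves implicit.
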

\begin{proof}
 By Theorem \ref{thm: game_converge} and the proof of Lemma \ref{lem: limpsi}, as $\lambda \downarrow 0$, it holds
 $$
 \Delta\Gamma_{t_{i}}^{\lambda, R} \rightarrow \I_{\{V_{t_{i}+} > R_{t_{i}}\}}
 \qquad \mbox{and} \qquad
 \Delta\Gamma_{t_{i}}^{\lambda, P} \rightarrow \I_{\{V_{t_{i}+} \leq P_{t_{i}}\}}
 $$
 \noindent almost surely. Hence, applying the same line of reasoning as in Theorem \ref{thm: btau_conv}, we deduce that 
$
 (\tau_{t}^{\lambda}, \sigma_t^{\lambda}) \rightarrow (\tau_{t}, \sigma_t)$ almost surely as $\lambda \downarrow 0$.
\end{proof}


\subsection{Policy Iteration Algorithm}
In parallel with the Bermudan option case, we now consider the corresponding policy iteration algorithm for the game setting. Policy iteration algorithms play a central role in the analysis of stochastic games, as they provide a systematic procedure for refining the players’ strategies in an alternating fashion. For clarity of exposition, we fix $\lambda > 0$ and suppress the dependence on $\lambda$ in the notation. Recall that for $\pi \in \Pi$ and $t_i \in S$, we define
\begin{align*}
G(t_i, x, \pi_{t_i}) & := \int^1_0 \Big\{(P_{t_i} - x)u \pi_{t_i}(u) - \lambda \pi_{t_i}(u)  \ln(\pi_{t_i}(u) )\Big\}du,\\
\underline{\pi}^{*}_{t_i}(x,u) & : = \textrm{argmax}_{\pi} G(t_i, x, \pi_{t_i}(u)) = \frac{\frac{1}{\lambda}(P_{t_{i}}-x)}{e^{\frac{1}{\lambda}(P_{t_{i}}-x)}-1} e^{\frac{1}{\lambda}(P_{t_{i}}-x)u}
\end{align*}

\noindent and similarly
\begin{align*}
H(t_i, x, \pi_{t_i}) & := \int^1_0 \Big\{(x-R_{t_i})u \pi_{t_i}(u) - \lambda \pi_{t_i}(u)  \ln(\pi_{t_i}(u) )\Big\}du,\\
\overline{\pi}^{*}_{t_i}(x,u) & : = \textrm{argmax}_{\pi} H(t_i, x, \pi_{t_i}(u)) = \frac{\frac{1}{\lambda}(x-R_{t_{i}})}{e^{\frac{1}{\lambda}(x-R_{t_{i}})}-1} e^{\frac{1}{\lambda}(x-R_{t_{i}})u}.
\end{align*}

Therefore, starting from the initial condition $V^{0,0}_t = \mathbb{E}[P_T \,|\, \mathcal{F}_t]$, we define the policy update and evaluation steps as follows. Given a candidate policy $\overline{\pi}^{n}(u) \in \Pi$ for the minimiser, $\underline{\pi}^n \in \Pi$ for the maximiser, and an associated process $V^{n,n}\in \cS^1$, the policy update step for the minimiser is
\begin{align}
\overline{\pi}^{n+1}(u) = \overline{\pi}^{*}(V^{n,n}_+,u). \label{scheme1}
\end{align}

\noindent The corresponding policy evaluation step is then given by
\begin{align}
V^{n,n+1}_t = P_T - \int_{]t,T]} dM^{n,n+1}_s + \sum_{t\leq t_i < T}
G(t_i, V^{n,n}_{t_i+},\underline{\pi}^{n}_{t_i}) -
H(t_i,V^{n,n+1}_{t_i+},\overline{\pi}^{n+1}_{t_i}). \label{scheme2}
\end{align}

\noindent Next, the policy update step for the maximiser is defined by
\begin{align}
\underline{\pi}^{n+1}(u) = \underline{\pi}^{*}(V^{n,n+1}_+,u) \label{scheme3}
\end{align}

\noindent and the corresponding policy evaluation step is
\begin{align}
    V^{n+1,n+1}_t = P_T - \int_{]t,T]} dM^{n+1,n+1}_s + \sum_{t\leq t_i < T} 
    G(t_i,V^{n+1,n+1}_{t_i+},\underline{\pi}^{n+1}_{t_i})
- H(t_i,V^{n,n+1}_{t_i+},\overline{\pi}^{n+1}_{t_i}). \label{scheme4}
\end{align}

In other words, the iteration consists of a minimisation step in \eqref{scheme2} followed by a maximisation step in \eqref{scheme4}. By the comparison theorem for BSDEs, this procedure yields the inequalities
\begin{align}
V^{n+1,n+1} - V^{n,n+1} \geq 0   \qquad \mathrm{and} \qquad V^{n,n} - V^{n,n+1}\geq 0.\label{minmaxstep}
\end{align}

Before presenting the main result of this section, we first establish an auxiliary lemma providing upper estimates for the quantities appearing in \eqref{minmaxstep}.

\bl \label{prop: game_est}
    For $k \in \NN_{+}$ and $t \in [0,T)$, we have
    \begin{align*}
        V_{t}^{n+1, n+1} - V_{t}^{n, n+1} &\leq \sum_{t \leq t_{i} < t_{i}^{1} < \hdots < t_{i}^{2k-1} < T} \sum_{j=0}^{k}\binom{2k}{2j}\EE[V_{t_{i}^{2k-1}+}^{n+1-2k+j, n+1-2k+j} - V_{t_{i}^{2k-1}+}^{n-2k+j, n+1-2k+j}\, | \, \cF_{t}] \\
        &\hspace{1em} + \sum_{t \leq t_{i} < t_{i}^{1} < \hdots < t_{i}^{2k-1} < T} \sum_{j=0}^{k-1}\binom{2k}{2j+1}\EE[V_{t_{i}^{2k-1}+}^{n+1-2k+j, n+1-2k+j} - V_{t_{i}^{2k-1}+}^{n+1-2k+j, n+2-2k+j}\, | \, \cF_{t}].
    \end{align*}
\el

\bt \label{thm: policy_convergence_game}
For fixed $\lambda \geq 0$ and $t \in [0,T)$, the policy iteration scheme defined through \eqref{scheme1}\textendash\eqref{scheme4} converges almost surely, in the sense that 
$$
V_{t}^{n,n} = V_{t}^\lambda \quad \text{for all } n \geq N(t_{+}).
$$
\et
\begin{proof}
\noindent \emph{Step 1.} We begin with the decomposition
\begin{align*}
    V^{n+1,n+1} - V^{n,n} = \big(V^{n+1,n+1} - V^{n,n+1}\big) + \big(V^{n,n+1} - V^{n,n}\big).
\end{align*}
From \eqref{minmaxstep} we know that $V^{n+1,n+1} - V^{n,n+1} \geq 0$ and $V^{n,n+1} - V^{n,n} \leq 0$.  
Adopting the same approach as in Theorem \ref{thm: policy_convergence}, we exploit the monotonicity of the generator when comparing consecutive iterations. For clarity, we focus on the term $V^{n+1,n+1} - V^{n,n+1}$; the convergence of $V^{n,n+1} - V^{n,n}$ follows from an analogous argument.

\vskip3pt
\noindent \emph{Step 2.} Fix $t \in [0,T)$. For even $n$, Lemma \ref{prop: game_est} implies that
\begin{align*}
     V_{t+}^{n+1, n+1} - V_{t+}^{n, n+1} &\leq \sum_{t < t_{i} < t_{i}^{1}< \hdots < t_{i}^{n-1} < T} \sum_{j=0}^{n/2}\binom{n}{2j}\EE[V_{t_{i}^{n-1}+}^{1+j, 1+j} - V_{t_{i}^{n-1}+}^{j, 1+j}] \\
    &\hspace{1em} + \sum_{t < t_{i} < t_{i}^{1}< \hdots < t_{i}^{n-1} < T} \sum_{j=0}^{n/2-1}\binom{n}{2j+1}\EE[V_{t_{i}^{n-1}+}^{1+j, 1+j} - V_{t_{i}^{n-1}+}^{1+j, 2+j}].
\end{align*}
Note that for $n \geq N(t+)-1$ the terms on the right-hand side of the previous inequality satisfy 
\begin{equation*}
    \sum_{t_{i}^{n-2}< t_{i}^{n-1} < T} \sum_{j=0}^{n/2}\binom{n}{2j}\EE[V_{t_{i}^{n-1}+}^{1+j, 1+j} - V_{t_{i}^{n-1}+}^{j, 1+j}\, | \, \cF_{t}]=0,
\end{equation*}
and 
\begin{equation*}
    \sum_{t_{i}^{n-2} < t_{i}^{n-1} < T} \sum_{j=0}^{n/2-1}\binom{n}{2j+1}\EE[V_{t_{i}^{n-1}+}^{1+j, 1+j} - V_{t_{i}^{n-1}+}^{1+j, 2+j}\, | \, \cF_{t}]=0,
\end{equation*}
since no $t_N < t_{i}^{n-1} \in S$ exists, and the sums are therefore empty.  
In the case where $n \geq N(t+)-1$ is odd, the same conclusion follows from inequality \eqref{eq: game_est_eq1}. Consequently,
$$
V_{t+}^{n+1,n+1} = V_{t+}^{n,n+1} \quad \text{almost surely on } [0,T).
$$
 
By a parallel argument we also obtain $V_{t+}^{n,n+1} = V_{t+}^{n,n}$ almost surely on $[0,T)$ for all $n \geq N(t+)$.  
Hence, the driver of $V_{t+}^{n,n}$ satisfies
\begin{align*}
    &\sum_{t< t_i < T} G(t_i,V^{n,n}_{t_i+},\underline{\pi}^{n}_{t_i})
- H(t_i,V^{n-1,n}_{t_i+},\overline{\pi}^{n}_{t_i}) \\
&= \sum_{t< t_i < T} \lambda \Phi\left(\frac{P_{t_{i}} - V_{t_{i}+}^{n-1, n}}{\lambda}\right) - \lambda \Phi\left(\frac{V_{t_{i}+}^{n-1, n-1} - R_{t_{i}}}{\lambda}\right) + (V_{t_{i}+}^{n-1, n} - V_{t_{i}+}^{n,n})\mu_{\underline{\pi}_{t_{i}}^{n}} - (V_{t_{i}+}^{n-1,n} - V_{t_{i}+}^{n-1, n-1})\mu_{\overline{\pi}_{t_{i}}^{n}} \\
&= \sum_{t< t_i < T} \lambda \Phi\left(\frac{P_{t_{i}} - V_{t_{i}+}^{n, n}}{\lambda}\right) - \lambda \Phi\left(\frac{V_{t_{i}+}^{n, n} - R_{t_{i}}}{\lambda}\right)
\end{align*}

\noindent since for all $t_{i} > t$ and $n \geq N(t+)$ we have $V_{t_{i}+}^{n-1, n-1} = V_{t_{i}+}^{n-1, n} = V_{t_{i}+}^{n, n}$.  
By Proposition \ref{prop: v_game_unique}, it follows that $V_{t+}^{n,n}=V_{t+}^{\lambda}$ almost surely.

\noindent \emph{Step 3: } Finally, for each $t_{i} \in S$ we obtain
$$
V_{t_{i}}^{n, n} = V_{t_{i}+}^{n,n} + G(t_i,V^{n,n}_{t_i+},\underline{\pi}^{n}_{t_i}) - H(t_i,V^{n-1,n}_{t_i+},\overline{\pi}^{n}_{t_i}) = V_{t_{i}}^{\lambda} \quad \text{almost surely for all } n \geq N(t_{i}+).
$$

This completes the proof.
\end{proof}

As in Corollary \ref{cor: policy_convergence}, we now specialise Theorem \ref{thm: policy_convergence_game} to the case $t = t_{0}$.  
In this setting we observe that $V^{n,n}$ converges to $V^{\lambda}$ once $n \geq N(t_{0+}) = N+1$.  
Furthermore, by uniqueness of the solution we may conclude that the updated policies coincide with their optimal counterparts, i.e.~$\overline{\pi}^{n+1} = \overline{\pi}^{*}$ and $\underline{\pi}^{n+1} = \underline{\pi}^{*}$.  

\bcor \label{cor: policy_convergence_game}
   Let $\lambda \geq 0$ be fixed. Then, for all $n \geq N+1$, we have 
$$
V^{n,n} = V^{\lambda}, 
\qquad \overline{\pi}^{n+1} = \overline{\pi}^{*}, 
\qquad \underline{\pi}^{n+1} = \underline{\pi}^{*},
$$
almost surely.
\ecor



\section{Numerical implementation} \label{Numerics}

In this section, building on Proposition \ref{prop: v_lambda_unique}, we present two numerical algorithms for computing the price of a Bermudan option as formulated in \eqref{Problem}. Specifically, we approximate the value function $V$ by evaluating $V^\lambda$. From Remark \ref{rem: monotone_conv} and Theorem \ref{bermudat1}, it follows that the estimates of $V^\lambda$ improve monotonically as the temperature parameter $\lambda$ decreases towards zero. To illustrate the approximation of the unique solution to the BSDE \eqref{eq: v_lambda_unique}, we work within a Markovian framework. It should be emphasised, however, that the methodology developed here is not restricted to this setting. 

The first method is a backward algorithm that minimises the Temporal Difference (TD) error (see, for example, \cite{SB}), and may be viewed as a BSDE solver. We refer to this approach as a \emph{TD-based BSDE solver}. The second method is the policy improvement algorithm for $V^\lambda$, presented and analysed in Subsection \ref{PIA}.


\subsection{Description of the Numerical Algorithms}

\subsubsection{TD-based BSDE solver}
From \eqref{eq: v_lambda_unique}, we deduce that the process
$$
M^{\lambda}_t = V^{\lambda}_t + \int_{[0,t)} (P_s-V^{\lambda}_{s+}) d\Gamma^{\lambda}_s,
$$
\noindent is a c\`adl\`ag martingale, with the hazard rate process given by
    $\Gamma^{\lambda} = \sum_{i=0}^N \Delta \Gamma_{t_i}^{\lambda}\I_{\llb t_i,\infty \llb}$ where the jumps size is given by $\Delta \Gamma^\lambda_{t_i} = \Psi\left(\lambda^{-1}(P_{t_i} - V^\lambda_{t_i+})\right)$.
This implies that $\Delta^+M^{\lambda} = 0$, which in turn gives, for each $t_i \in S$,
\begin{equation} \label{eq: adjustment_condition}
    \Delta^+ V_{t_{i}}^{\lambda} + (P_{t_{i}}-V_{t_{i}+}^{\lambda}) \Psi\left(\frac{P_{t_{i}}-V_{t_{i}+}^{\lambda}}{\lambda}\right) = 0, 
\end{equation}
where $ \Delta^+ V_{t_{i}}^{\lambda} = V_{t_{i}+}^{\lambda} -  V_{t_{i}}^{\lambda}$, $V_{t_{i}+}^{\lambda} = \mathbb{E}[V^{\lambda}_{t_{i+1}} |\cF_{t_i}]$, and $V^{\lambda}_{T}  = P_T$. Moreover, \eqref{eq: V^lambda n j} and Theorem \ref{thm: policy_convergence} guarantee that for any $i\in \left\{0, \cdots, N-1\right\}$ and for any $t \in (t_i, t_{i+1}]$
\begin{gather}
V^{\lambda}_t = \mathbb{E}[V^{\lambda}_{t_{i+1}}|\cF_{t}]. \label{eq:martingale2}
\end{gather}



To proceed, fix $\lambda \in (0, 1)$ and let $v^{\lambda}$ be a function on $[0,T]\times \mathbb{R}^d$, with $V_t^{\lambda} = v^{\lambda}(t, X_t)$. We approximate $v^{\lambda}$ by $N+2$ components, namely the $N+1$ continuation values $V_{t_i+}^{\lambda}$ are estimated by neural networks $\mathscr{V}^{\eta_i}$ for each $t_i \in S = \{t_{0}, \dots, t_{N}\}$, and the value function between exercise dates is estimated by a family of neural networks $v^{\lambda, \theta}$, parameterized by $\theta \in \Theta \subset \mathbb{R}^H$. Moreover, from \eqref{eq: adjustment_condition}, we obtain an approximation for $v^{\lambda}$ at each exercise date by
\begin{equation} \label{eq: adjustment_condition_network}
    v^{\lambda, \eta_{i}}(t_{i}, X_{t_{i}}) = \mathscr{V}^{\eta_{i}}(X_{t_{i}}) + (P_{t_i}(X_{t_{i}}) - \mathscr{V}^{\eta_{i}}(X_{t_{i}})) \Psi\left(\frac{P_{t_i}(X_{t_{i}}) - \mathscr{V}^{\eta_{i}}(X_{t_{i}})}{\lambda}\right).
\end{equation}
Equation \eqref{eq: adjustment_condition_network} shows that the value function at an exercise date can be fully determined once the corresponding continuation value is known. 

The {\it adjustment condition} \eqref{eq: adjustment_condition} and the {\it martingale condition} \eqref{eq:martingale2} give our formulation of the TD (Temporal Difference) errors to compute the value function $V^\lambda$ and the continuation value $V^\lambda_+$ at each exercise date:

\begin{itemize}[leftmargin=15pt]


\item 
Guided by the relation $V_{t_i+}^{\lambda} = \mathbb{E}[V^{\lambda}_{t_{i+1}} \mid \cF_{t_i}]$, $i = 0, \dots, N$, we approximate the continuation value at time $t_i$ by solving
\begin{equation} \label{loss1}
    \min_{\eta_i }\mathbb{E}\Big[\big|v^{\lambda, \eta_{i+1}}(t_{i+1}, X_{t_{i+1}}) - \mathscr{V}^{\eta_i}(X_{t_i})\big|^2\Big], \quad i =N, \cdots, 0,
\end{equation}
\noindent with the terminal condition $v^{\lambda, \eta_{N+1}}(t_{N+1}, x) = P_T(x)$. Hence, starting from maturity, we proceed backwards in time, using \eqref{eq: adjustment_condition_network} to recursively estimate the continuation values at earlier exercise dates. Since the networks are trained backward in time, note that the optimal neural network $\mathscr{V}^{\eta^{\star}_i}$ obtained as the solution to \eqref{loss1} depends implicitly on all the optimal parameters $\eta^{\star}_{i+1}, \cdots, \eta^{\star}_{N}$ obtained in the previous optimization procedure but, to simplify notation, we omit these parameters in the notation.

    \item Guided by the {\it martingale condition} \eqref{eq:martingale2}, we approximate the value function between exercise dates by minimising the martingale loss
    \begin{equation}
        \mathbb{E}\left[ \sum_{i=0}^{N-1} \int_{t_i}^{t_{i+1}} (v^{\lambda, \eta^{\star}_{i+1}}(t_{i+1}, X_{t_{i+1}})  - v^{\lambda, \theta}(t, X_t))^2 \, dt  \right], \label{loss2}
    \end{equation}
    with respect to the parameter $\theta$. 
 \end{itemize}

 The pseudo-code for the above algorithm is described in Algorithm \ref{alg: offalgo_2}.

\begin{algorithm2e}[H] \label{alg: offalgo_2} 
\DontPrintSemicolon 
\SetAlgoLined 
\vspace{1mm}
{\bf Input data}: Number of episodes $M$, exercise dates $0 = t_0 <  \ldots < t_N < t_{N+1} = T$, number of mesh time-grid $n>>N$ (time step $\Delta t$ $=$ $T/n$, the dates $\hat{t}_k=k \Delta t$, $k=0, \cdots, n$ are compatible with the exercise dates $t_i$), learning rates $\rho_G$, $\rho_E$ for the two steps, $R$ sample paths, batch size $L<<R$. 
Parameter $\lambda$ for entropy regularization. 
Neural networks $v^{\lambda, \theta}$ and $(\mathscr{V}^{\eta_{i}})_{i = 0, 1, \dots, N}$ of the value function and conditional expectations. 

{\bf Initialization}: $v^{\lambda, \theta}({t}_{N+1}, x) = P_T(x)$,  
$v^{\lambda, \eta_{N+1}}({t}_{N+1}, x) = P_T(x)$. Generate $L$ paths $(X_{\hat{t}_{k}}^{(l)})_{0\leq k \leq  n}$, $l=1, \cdots, L$ of $X$.
\\
\For{exercise dates ${t}_{i} = {t}_{N},\ldots, {t}_{0}$}
{ 
    \For{each epoch $m = 1, \hdots, M$}
	   {
       Compute the gradient $\nabla_{\eta_{i}}$ with respect to $\eta_i$ of the map 
            
            \begin{equation*}
                \EE\left[|v^{\lambda, \eta_{i+1}}({t}_{i+1}, X_{{t}_{i+1}}) - \mathscr{V}^{\eta_{i}}(X_{{t}_{i}})|^{2}\right] \approx \frac{1}{L} \sum_{l=1}^{L} |v^{\lambda, \eta_{i+1}}({t}_{i+1}, X_{{t}_{i+1}}^{(l)}) - \mathscr{V}^{\eta_{i}}(X_{{t}_{i}}^{(l)})|^{2}.
		  \end{equation*}
          
		Update: $\eta_{i} \leftarrow \eta_{i} - \rho_{E} \nabla _{\eta_{i}}$
	   }
    Set $v^{\lambda, \eta_{i}}(t_{i}, \cdot) =   \mathscr{V}^{\eta_{i}}(\cdot) + \lambda\Phi(\lambda^{-1}(P_{t_i}(\cdot) - \mathscr{V}^{\eta_{i}}(\cdot)))$.
}
\For{each epoch $m = 1, \hdots, M$}
{
	Compute the gradient $\nabla_{\theta}$ with respect to $\theta$ of the map
    \begin{align*}
		 &\mathbb{E}\Big[ \sum_{i=0}^{N} \int_{{t}_i}^{{t}_{i+1}} (v^{\lambda, \eta_{i+1}}({t}_{i+1}, X_{{t}_{i+1}})  - v^{\lambda, \theta}(t, X_{t}))^2 \, dt  \Big] \\
         & \quad 
	\approx \frac{1}{L}\sum_{l=1}^{L} \sum_{i=0}^{N}\sum_{{t}_{i}< \hat{t}_{k} < {t}_{i+1}}(v^{\lambda, \eta_{i+1}}(t_{i+1}, X_{{t}_{i+1}}^{(l)})  - v^{\lambda, \theta}(\hat{t}_{k}, X_{\hat{t}_{k}}^{(l)}))^{2} \Delta t
	\end{align*}
	 Update: $\theta \leftarrow \theta - \rho_{G} \nabla _{\theta}$
}
{\bf Return}: The maps $(v^{\lambda, \eta_{i}})_{0\leq i \leq N}$ and $v^{\lambda, \theta}$.
\caption{Bermudan option algorithm }
\end{algorithm2e}

\subsubsection{Policy Improvement Algorithm}
We now present the policy improvement algorithm. The purpose of this section is to highlight both the convergence and the monotonicity properties of the scheme described in Subsection~\ref{PIA}. In particular, we aim to establish three key facts:

\begin{enumerate}
\item[(i)] The policy iteration algorithm converges after a finite number of iterations—specifically, no more than the number of admissible exercise dates, as shown in Theorem~\ref{thm: policy_convergence}.
\item[(ii)] The sequence of value functions is non-decreasing across iterations, that is, $V^{\lambda,n+1} \geq V^{\lambda,n}$ for all $n \geq 1$.
\item[(iii)] For a fixed number of iterations $n$, the price approximation improves as the regularisation parameter decreases: if $\lambda_1 \leq \lambda_2$ then $V^{\lambda_1,n} \geq V^{\lambda_2,n}$.
\end{enumerate}

The algorithm alternates between estimating continuation values and updating the policy, thereby ensuring monotonic improvement of the value function. Each stage involves training neural networks to approximate conditional expectations, with separate updates for policy evaluation and policy improvement.

More precisely, $v^{\eta_i,k}$ denotes the approximation of the value function at exercise date $t_i$ in the $k$-th iteration, while $v^{\theta,k}$ represents its time-continuous counterpart. At each iteration, we train a neural network on simulated sample paths of the process $X$ to approximate the continuation values $v_{t_i+}^{\lambda,n}$ and the value function between exercise dates, following the same strategy as in the TD-based BSDE solver. These approximations feed into the policy evaluation and update steps \eqref{eq: pin1}–\eqref{eq: b_Vlamb_n}.

For clarity, the pseudo-code of the full algorithm is provided in Algorithm~\ref{alg: policy_algo}.

\begin{algorithm2e}[H] \label{alg: policy_algo} 
\DontPrintSemicolon 
\SetAlgoLined 
\vspace{1mm}
{\bf Input data}: Number of training epochs $M$, exercise dates $0 = t_0 <  \ldots < t_N < t_{N+1} = T$, number of mesh time-grid $n>>N$ (time step $\Delta t$ $=$ $T/n$, the dates $\hat{t}_k=k \Delta t$, $k=0, \cdots, n$ are compatible with the exercise dates $t_i$), number of policy iterations $K > N$, learning rates $\rho_G$, $\rho_E$ for the two steps, $R$ sample paths, batch size $L << R$. Temperature parameter $\lambda>0$.  Neural networks $v^{\theta}$ and $\mathscr{V}^{\eta_{i}}$ of the value function and conditional expectations. 

{\bf Initialization}: $v^{\eta_{N+1}, k}(t_{N+1}, x) = P_T(x)$ for $k=0, \dots, K$, $v^{\theta, 0}(t_{i}, x) = \EE[P_{T}|X_{t_{i}}=x]$ and $\mathscr{V}^{\eta_{i}, 0}(t_{i}, x) = \EE[P_{T}|X_{t_{i}}=x]$ for $i=0, \hdots, n$. Generate $L$ paths $(X_{\hat{t}_{k}}^{(l)})_{0\leq k \leq  n}$, $l=1, \cdots, L$ of $X$.
\\
\For{$k = 0, \dots, K-1$}{
    \For{exercise dates $t_{i} = t_{N},\ldots, t_{0}$}
{ 
    \For{each epoch $m = 1, \hdots, M$}{
           Compute the gradient $\nabla_{\eta_{i}}$ with respect to $\eta_i$ of the map 
		  
            \begin{equation*}
                \EE\left[|v^{\eta_{i+1}, k+1}(t_{i+1}, X_{t_{i+1}}) - \mathscr{V}^{\eta_{i}, k+1}(X_{t_{i}})|^{2}\right] \approx \frac{1}{L} \sum_{l=1}^{L} |v^{\eta_{i+1}, k+1}(t_{i+1}, X_{t_{i+1}}^{(l)}) - \mathscr{V}^{\eta_{i}, k+1}(X_{t_{i}}^{(l)})|^{2}
		  \end{equation*}
		  with respect to the parameters $\eta_{i}$. Update: $\eta_{i} \leftarrow \eta_{i} - \rho_{E} \nabla _{\eta_{i}}$
	   }
    Set 
    \begin{align*}
        \pi^{k+1}_{t_i}(u) & : = \pi^{*}_{t_i}(\mathscr{V}^{\eta_{i}, k},u)  = \frac{\frac{1}{\lambda}(P_{t_{i}}-\mathscr{V}^{\eta_{i}, k})}{e^{\frac{1}{\lambda}(P_{t_{i}}-\mathscr{V}^{\eta_{i}, k})}-1} e^{\frac{1}{\lambda}(P_{t_{i}}-\mathscr{V}^{\eta_{i}, k})u}, \quad u \in [0,1], 
    \end{align*}
    and 
    \begin{equation*}
        G(t_{i}, \mathscr{V}^{\eta_{i}, k+1}, \pi_{t_{i}}^{k+1}) = \lambda \Phi\left(\frac{P_{t_{i}} - \mathscr{V}^{\eta_{i}, k}}{\lambda}\right) + (\mathscr{V}^{\eta_{i}, k} - \mathscr{V}^{\eta_{i}, k+1})\times \mu_{\pi_{
        t_{i}}^{k+1}}.
    \end{equation*}
    Set $v^{\eta_{i}, k+1}(t_{i}, \cdot) =  \mathscr{V}^{\eta_{i}, k+1}(\cdot) + G(t_{i}, \mathscr{V}^{\eta_{i}, k+1}, \pi_{t_{i}}^{k+1})$.
}
\For{each epoch $m = 1, \hdots, M$}
{
	Compute the gradient $\nabla_{\theta}$ with respect to $\theta$ of the map
    \begin{align*}
		 &\mathbb{E}\Big[ \sum_{i=0}^{N} \int_{t_i}^{t_{i+1}} (v^{\eta_{i+1}, k+1}(t_{i+1}, X_{t_{i+1}})  - v^{\theta, k+1}(t, X_{t}))^2 \, dt  \Big] \\
	   &\approx \frac{1}{L}\sum_{l=1}^{L} \sum_{i=0}^{N}\sum_{t_{i}< \hat{t}_{j} < t_{i+1}}(v^{\eta_{i+1}, k+1}(t_{i+1}, X_{t_{i+1}}^{(l)})  - v^{\theta, k+1}(\hat{t}_{j}, X_{\hat{t}_{j}}^{(l)}))^{2} \Delta t
	\end{align*}
	with respect to the parameters $\theta$. Update: $\theta \leftarrow \theta - \rho_{G} \nabla _{\theta}$
}
}
{\bf Return}: The maps $(v^{\eta_{i}, K})_{0\leq i \leq N}$ and $v^{\theta, K}$.
\caption{Policy improvement algorithm}
\end{algorithm2e}

\subsection{Numerical results}

\subsubsection{Bermudan Put option} \label{ex: policy_1d}
We implement the TD-based BSDE solver and policy improvement algorithm to compute the price of a Bermudan put option whose price at time $0$ is given by
\begin{equation*}
\sup_{\tau \in \cT(S)} \EE[e^{-r\tau}(K - X_{\tau})^{+}],
\end{equation*}

\noindent where $X$ follows a standard one-dimensional Black–Scholes dynamics 
$$
X_t = x_0 \exp\big((r-\sigma^2/2)t+\sigma W_t\big), \quad t\in [0,T].
$$

We consider the case $T=1$ and $N=3$, so that $\cT(S) = \left\{0, 0.25, 0.50, 0.75\right\} \cup \left\{1\right\}$, and take $x_0 = 100$, $r = 0.05$, $\sigma = 0.1$, and $K = 100$.

Table \ref{tab: policy_simple} reports the values of $v_0^{\lambda,n} \approx V_0$ at each iteration of the policy improvement scheme defined in \eqref{eq: b_Vlamb_n}-\eqref{eq: Gpi_def} and the TD-based BSDE solver. The initial value is given by the Black-scholes price of the European put option $v^{\lambda, 0}_{0} = \EE[P_{T}]$. At each iteration, we train $N=3$ neural networks on batches of $L=8192$ sample paths of $X$. Note that according to Corollary \ref{cor: policy_convergence}, the scheme converges after $n = N = 3$ iterations.

We observe that the value of $v_0^{\lambda, n}$ decreases as $\lambda$ increases. However, for $n \geq 1$, the monotonicity implied by the comparison theorem holds only up to $n=3$. For reference, the Black–Scholes European price of the put option is 1.928, while the Longstaff–Schwartz projection method yields an estimate of 2.311.

\begin{table}[h] 
    \centering
    \begin{tabular}{r ccccccc c}
        \toprule
            &   &      \multicolumn{4}{c}{\textbf{Policy iteration}} &  & {\textbf{TD Solver}} & \\
\cmidrule(lr){2-7} \cmidrule(lr){8-8}
        \textit{$\lambda$} & $v_{0}^{\lambda, 0}$ & $v_{0}^{\lambda, 1}$ & $v_{0}^{\lambda, 2}$ & $v_{0}^{\lambda, 3}$ & $v_{0}^{\lambda, 4}$ & $v_{0}^{\lambda, 5}$ & $v_{0}^{\lambda}$\\ \midrule

        0.1 & 1.928 & 1.639 & 1.647 & 1.649 & 1.647 & 1.649 & 1.645\\
        0.01 & 1.928 & 2.165 & 2.179 & 2.180 & 2.180 & 2.179 & 2.175\\
        0.001 & 1.928 & 2.286 & 2.298 & 2.303 & 2.302 & 2.302 & 2.299\\

        \bottomrule
    \end{tabular}
    \caption{Values of $v_0^{\lambda,n}$ obtained by the policy improvement algorithm for $\lambda = 0.1, 0.01, 0.001$ and $n=0, \cdots, 5$. Values from the TD solver $v^{\lambda}_{0}$ are also provided.}
    \label{tab: policy_simple}
\end{table}

\subsubsection{Bermudan max-call option}\label{max:call:option}
We consider the symmetric case of a Bermudan max-call option, as described in Becker \emph{et al.} \cite{BCJ2019} and references therein. In particular, we assume that the underlying assets follow a $d$-dimensional Black–Scholes model with dividends:
\begin{equation} \label{eq: GBMwD}
X_t^i = x_0^i \exp\big((r - \delta - \sigma^2/2) t + \sigma W_t^i\big), \quad i = 1, \dots, d,
\end{equation}

\noindent where $x_0^i$ denotes the initial values, $r$ the risk-free rate, $\delta$ the constant dividend yield, $\sigma$ the constant volatility, and $W=(W^1, \cdots, W^d)$ a standard $d$-dimensional Brownian motion.

We recall that the price of the Bermudan max-call option is given by 
\begin{equation*}
    \sup_{\tau \in \cT(S)} \EE\left[e^{-r\tau} \left(\max_{1 \leq i \leq d}X_{\tau}^{i} - K\right)^{+}\right],
\end{equation*}

\noindent where $\cT(S)$ denotes the set of stopping times taking values in $\left\{t_{0}, \ldots, t_{N}\right\} \cup \left\{T\right\}$, with $t_{n} = nT/(N+1)$ for $n = 0, \cdots, N$. The model parameters are $r = 0.05$, $\delta = 0.1$, $\sigma = 0.2$, $\rho = 0$, $K = 100$, $T = 3$, and $N = 8$, in line with our chosen notation. 

Following Becker \emph{et al.} \cite{BCJ2019}, each time point corresponds to an exercise date, so it is sufficient to approximate only the continuation values. In each training epoch, we simulate a batch of 8192 paths sampled from \eqref{eq: GBMwD}, and estimate the value at each exercise date using \eqref{eq: adjustment_condition_network}. Nonetheless, Algorithm \ref{alg: offalgo_2} can also compute prices between exercise dates if a finer time grid is required. From an implementation standpoint, our approach is close to the projection method of Longstaff and Schwartz. The main differences are: (i) the adjustment imposed by condition \eqref{eq: adjustment_condition_network} to account for each exercise opportunity, and (ii) the use of neural networks and the TD error to estimate continuation values.

We first illustrate the convergence of the policy improvement algorithm in dimension $d=2$ for different values of the temperature parameter $\lambda$ and initial stock price $x_0^1=x_0^2=x_0$. Table \ref{tab: policy_max_call} presents the estimated option values at each iteration of the policy improvement scheme, as defined in \eqref{eq: b_Vlamb_n} and \eqref{eq: Gpi_def}. As in the case of the Bermudan put option, convergence appears to occur at $n = N = 8$, and the results also exhibit monotonicity with respect to $\lambda$. 

\begin{table}[h]
    \centering
    \begin{tabular}{rc ccccccccccc}
        \toprule
            &         & \multicolumn{9}{c}{\textbf{Policy iteration}}\\
\cmidrule(lr){3-11} 
        \textit{$\lambda$} & $x_0$ & $v_{0}^{\lambda, 0}$ & $v_{0}^{\lambda, 1}$ & $v_{0}^{\lambda, 2}$ & $v_{0}^{\lambda, 3}$ & $v_{0}^{\lambda, 4}$ & $v_{0}^{\lambda, 5}$ & $v_{0}^{\lambda, 6}$ & $v_{0}^{\lambda, 7}$ & $v_{0}^{\lambda, 8}$ \\ \midrule

        0.1 & 90 & 6.656 & 5.653 & 5.697 & 5.700 & 5.717 & 5.700 & 5.704 & 5.706 & 5.705 \\
        0.1 & 100 & 11.193 & 11.494 & 11.577 & 11.583 & 11.584 & 11.581 & 11.586 & 11.583 & 11.582 \\
        0.1 & 110 & 16.929 & 19.126 & 19.284 & 19.283 & 19.293 & 19.298 & 19.293 & 19.302 & 19.301 \\
        \addlinespace[0.3em]

        0.01 & 90 & 6.656 & 7.578 & 7.652 & 7.667 & 7.684 & 7.680 & 7.680 & 7.679 & 7.679 \\
        0.01 & 100 & 11.193 & 13.293 & 13.497 & 13.500 & 13.501 & 13.497 & 13.501 & 13.502 & 13.502 \\
        0.01 & 110 & 16.929 & 20.661 & 21.000 & 20.966 & 20.969 & 20.967 & 20.971 & 20.976 & 20.978 \\
        \addlinespace[0.3em]

        0.001 & 90 & 6.656 & 7.921 & 8.027 & 8.029 & 8.030 & 8.030 & 8.030 & 8.029 & 8.031 \\
        0.001 & 100 & 11.193 & 13.635 & 13.870 & 13.849 & 13.849 & 13.850 & 13.848 & 13.850 & 13.848 \\
        0.001 & 110 & 16.929 & 20.913 & 21.310 & 21.312 & 21.313 & 21.308 & 21.299 & 21.301 & 21.302 \\
        \bottomrule
    \end{tabular}
    \caption{Values of $v_0^{\lambda, n}$ obtained by the policy improvement algorithm for $\lambda = 0.1, 0.01, 0.001$, $n=0, \cdots, 8$, and $x_0=90, 100, 110$.}
    \label{tab: policy_max_call}
\end{table}

Then, we present the numerical results obtained from the TD-based BSDE solver and the policy improvement algorithm (after $n=N=8$ iterations) when the dimension $d$ varies in Table \ref{tab: res_alg}. Each entry represents the estimated value function at time $t_{0}=0$ with initial prices ${x}^{i}_{0} = x_{0}$, $i=1, \cdots, d$, where $x_{0} \in \left\{90, 100, 110\right\}$. We observe that as $\lambda$ decreases, the estimates increase monotonically, in agreement with Remark \ref{rem: monotone_conv}.   

\vskip10pt
\begin{table}[h]
    \centering
    \begin{tabular}{cc ccc ccc c}
        \toprule
            &         & \multicolumn{3}{c}{\textbf{TD-based BSDE solver}} & \multicolumn{3}{c}{\textbf{Policy Improvement}} &         \\
\cmidrule(lr){3-5} \cmidrule(lr){6-8}
        \textit{d} & $x_0$ & $v_{0}^{0.1, \theta}$ & $v_{0}^{0.01, \theta}$ & $v_{0}^{0.001, \theta}$ & $v_{0}^{0.1, 8}$ & $v_{0}^{0.01, 8}$ & $v_{0}^{0.001, 8}$ & Becker \emph{et al.}\@ \cite{BCJ2019} \\ \midrule

        2 & 90 & 5.694 & 7.666 & 8.020 & 5.705 & 7.679 & 8.031 & 8.074\\
        2 & 100 & 11.580 & 13.504 & 13.849 & 11.582 & 13.502 & 13.848 & 13.899\\
        2 & 110 & 19.256 & 20.977 & 21.296 & 19.301 & 20.978 & 21.302 & 21.349\\
        \addlinespace[0.3em]

        10 & 90 & 22.900 & 25.846 & 26.298 & 23.067 & 25.996 & 26.450 & 26.240\\
        10 & 100 & 35.052 & 37.945 & 38.371 & 35.180 & 38.157 & 38.599 & 38.337\\
        10 & 110 & 47.536 & 50.463 & 50.960 & 47.816 & 50.838 & 51.268 & 50.886\\
        \addlinespace[0.3em]

        50 & 90 & 50.279 & 53.704 & 54.161 & 50.643 & 54.454 & 55.136 & 54.057\\
        50 & 100 & 65.683 & 69.301 & 69.830 & 66.867 & 70.645 & 70.725 & 69.736\\
        50 & 110 & 81.575 & 84.993 & 85.606 & 82.481 & 86.214 & 86.236 & 85.463\\
        
        \bottomrule
    \end{tabular}
    \caption{Results of the TD-based BSDE solver and the policy improvement algorithm for $\lambda = 0.1, 0.01, 0.001$.}
    \label{tab: res_alg}
\end{table}

\section{Appendix}\label{AL}

\subsection{Proofs for Bermudan options}

\subsubsection{ Proof of Lemma \ref{randomized}}\label{proof:lem:randomized}
The result clearly holds for $t = T$, so we consider the case where $t < T$. We first show that the process $V$ given by \eqref{RBSDEbermuda} is greater than the right-hand side of \eqref{RV}. Writing the dynamics of \eqref{RBSDEbermuda} forward gives
\begin{equation*}
	V_{t} = V_{0} + M_{t} - \sum_{0 \leq t_{i} < t}(P_{t_{i}} - V_{t_{i}+})^{+}. 
\end{equation*}
Let $\Gamma = \sum_{i=0}^N \Delta \Gamma_{t_i}\I_{\llb t_i,\infty \llb}$ with $\Delta \Gamma \in [0,1)$. Then by It\^o-Gal'c\v uk-Lenglart formula, we have
\begin{align*}
	\cE_{t}(-\Gamma_{-})V_{t} &= V_{0} + \int_{]0,t]}\cE_{s-}(-\Gamma_{-}) \, dM_{s} - \int_{[0,t[} V_{s}\cE_{s}(-\Gamma_{-}) \, d\Gamma_{s} \\ 
	&\hspace{1em} - \sum_{0 \leq t_{i} < t}\cE_{t_{i}}(-\Gamma_{-})(P_{t_{i}} - V_{t_{i}+})^{+} + \sum _{0 \leq s < t} \Delta^{+}\cE_{s}(-\Gamma_{-})\Delta^{+}V_{s} \\
	&= V_{0} + \int_{]0,t]}\cE_{s-}(-\Gamma_{-}) \, dM_{s} - \sum_{0 \leq t_{i} < t} V_{t_{i}}\cE_{t_{i}}(-\Gamma_{-}) \Delta\Gamma_{t_{i}} \\ 
	&\hspace{1em} - \sum_{0 \leq t_{i} < t}\cE_{t_{i}}(-\Gamma_{-})(P_{t_{i}} - V_{t_{i}+})^{+} + \sum _{0 \leq t_{i} < t} \cE_{t_{i}}(-\Gamma_{-})\Delta \Gamma_{t_{i}}(P_{t_{i}} - V_{t_{i}+})^{+} \\
	&= V_{0} + \int_{]0,t]}\cE_{s-}(-\Gamma_{-}) \, dM_{s} - \sum_{0 \leq t_{i} < t}V_{t_{i}+}\cE_{t_{i}}(-\Gamma_{-})\Delta \Gamma_{t_{i}} - \sum_{0 \leq t_{i} < t}\cE_{t_{i}}(-\Gamma_{-})(P_{t_{i}} - V_{t_{i}+})^{+},
\end{align*}
where we use the fact that $V_{t_{i}} = V_{t_{i}+} + (P_{t_{i}} - V_{t_{i}+})^{+}$ in the final equality. Hence, 
\begin{align*}
	\cE_{t}(-\Gamma_{-})V_{t} &= P_{T}\cE_{T}(-\Gamma_{-}) - \int_{]t,T]}\cE_{s-}(-\Gamma_{-}) \, dM_{s} \\
	&\hspace{1em} + \sum_{t \leq t_{i} < T}V_{t_{i}+} \cE_{t_{i}}(-\Gamma_{-}) \Delta \Gamma_{t_{i}} + \sum_{t \leq t_{i} < T} \cE_{t_{i}}(-\Gamma_{-})(P_{t_{i}} - V_{t_{i}+})^{+} \\
	&\geq P_{T}\cE_{T}(-\Gamma_{-}) - \int_{]t,T]}\cE_{s-}(-\Gamma_{-}) \, dM_{s} \\
	&\hspace{1em} + \sum_{t \leq t_{i} < T}V_{t_{i}+} \cE_{t_{i}}(-\Gamma_{-}) \Delta \Gamma_{t_{i}} + \sum_{t \leq t_{i} < T} \cE_{t_{i}}(-\Gamma_{-})(P_{t_{i}} - V_{t_{i}+})^{+}\Delta \Gamma_{t_{i}} \\
	&= P_{T}\cE_{T}(-\Gamma_{-}) - \int_{]t,T]}\cE_{s-}(-\Gamma_{-}) \, dM_{s} + \sum_{t \leq t_{i} < T}V_{t_{i}} \cE_{t_{i}}(-\Gamma_{-}) \Delta \Gamma_{t_{i}} \\
	&\geq P_{T}\cE_{T}(-\Gamma_{-}) - \int_{]t,T]}\cE_{s-}(-\Gamma_{-}) \, dM_{s} + \int_{[t,T[} P_{s} \cE_{s}(-\Gamma_{-}) \, d\Gamma_{s}
\end{align*}
where we again use the fact that $V_{t_{i}} = V_{t_{i}+} + (P_{t_{i}} - V_{t_{i}+})^{+}$ and $V_{t_{i}} \geq P_{t_{i}}$ for all $t_{i} \in S$. Therefore, 
\begin{equation*}
	V_{t} \geq \esssup_{\Gamma \in \Pi} \EE[P_{T}\cE_{t,T}(-\Gamma_{-}) + \int_{[t,T[} P_{s}\cE_{t,s}(-\Gamma_{-}) \, d\Gamma_{s} | \cF_{t}].
\end{equation*}
To obtain the reverse inequality, we consider
\begin{align*}
\Pi_n = \left\{\Gamma \in \Pi: \Delta \Gamma_{t_i} \in [0,1-1/n],\,\,i = 0,1,\dots, N\right\}  \subset \Pi
\end{align*}
and we show that the process $V$ given in \eqref{RBSDEbermuda} is less than the right hand side of \eqref{RV}. To this end we consider a sequence of BSDEs $(\widetilde{Y}^{n})_{n}$ given by 
\begin{equation*}
	\widetilde{Y}^{n} = P_{T} - (\widetilde{M}^{n}_{T} - \widetilde{M}^{n}_{t}) + \sum_{t \leq t_{i} < T} \left(1 - \frac{1}{n} \right)(P_{t_{i}} - \widetilde{Y}^{n}_{t_{i}+})^{+}. 
\end{equation*}
By comparison theorem we have $\widetilde{Y}^{n} \leq \widetilde{Y}^{n+1}$ and that $\widetilde{Y}^{n}$ converges to $V$, where $V$ is the BSDE \eqref{RBSDEbermuda}. Now consider the function
\begin{equation}
	g(x) = \left(1 - \frac{1}{n}\right) \I_{\{P - x > 0\}} + \epsilon \left(\I_{\{-1 \leq P - x \leq 0\}} + |P-x|^{-1}\I_{\{P-x < -1\}}\right),
\end{equation}
which is Lipschitz continuous and satisfies the inequality $0 \leq (1 - \frac{1}{n})(P-x)^{+} - (P - x)g(x) \leq \epsilon$. Thus by the comparison theorem, we have that 
\begin{align*}
	\widetilde{Y}^{n}_{t} &= P_{T} - (\widetilde{M}^{n}_{T} - \widetilde{M}^{n}_{t}) + \sum_{t \leq t_{i} < T} \left(1 - \frac{1}{n} \right)(P_{t_{i}} - \widetilde{Y}^{n}_{t_{i}+})^{+} \\
	&\leq P_{T} - (\widetilde{M}^{n, \epsilon}_{T} - \widetilde{M}^{n, \epsilon}_{t}) + \sum_{t \leq t_{i} < T} (P_{t_{i}} - \widetilde{Y}^{n, \epsilon}_{t_{i}+})g(\widetilde{Y}^{n, \epsilon}_{t_{i}+}) + \epsilon =: \widetilde{Y}^{n, \epsilon}_{t}. 
\end{align*}
Next, consider the BSDE
\begin{equation*}
	V_{t}^{n, \epsilon} = P_{T} - (M_{T}^{n,\epsilon} - M_{t}^{n, \epsilon}) + \sum_{t \leq t_{i} < T}(P_{t_{i}} - V_{t_{i}+}^{n, \epsilon})g(\widetilde{Y}_{t_{i}+}^{n, \epsilon}) \leq V_{t}^{n} := \esssup_{\varphi \in \Pi_{n}}V_{t}^{\varphi}, 
\end{equation*}
where 
\begin{equation*}
    V_{t}^{\varphi} := P_{T} - (M_{T}^{\varphi} - M_{t}^{\varphi}) + \sum_{t \leq t_{i} < T}(P_{t_{i}} - V_{t_{i}+}^{\varphi}) \Delta\varphi_{t_{i}}.
\end{equation*}
Moreover, by applying the It\^o-Gal'c\v uk-Lenglart formula to $\cE(-\varphi_{-})V^{\varphi}$ and the identity $V_{t_{i}}^{\varphi} = V_{t_{i+}}^{\varphi} + (P_{t_{i}} - V_{t_{i}+}^{\varphi}) \Delta \varphi_{t_{i}}$, we observe that 
\begin{equation*}
	V^\varphi_t = \mathbb{E}[P_T\mathcal{E}_{t,T}(-\varphi_-) + \int_{[t,T[} P_s \mathcal{E}_{t,s}(-\varphi_-) d\varphi_s|\cF_t]. 
\end{equation*}

Now, since that $g$ is positive and $\widetilde{Y}^{n, \epsilon} - V^{n, \epsilon} \geq 0$ due to comparison theorem, it follows that 
\begin{align*}
	\widetilde{Y}_{t}^{n, \epsilon} - V_{t}^{n, \epsilon} &= (M_{T}^{n, \epsilon} - M_{t}^{n, \epsilon}) - (\widetilde{M}_{T}^{n, \epsilon} - \widetilde{M}_{t}^{n, \epsilon}) + \sum_{t \leq t_{i} < T} \left\{(P_{t_{i}} - \widetilde{Y}^{n, \epsilon}_{t_{i}+})g(\widetilde{Y}^{n, \epsilon}_{t_{i}+}) + \epsilon - (P_{t_{i}} - V^{n, \epsilon}_{t_{i}+})g(\widetilde{Y}^{n, \epsilon}_{t_{i}+})\right\} \\
	&= (M_{T}^{n, \epsilon} - M_{t}^{n, \epsilon}) - (\widetilde{M}_{T}^{n, \epsilon} - \widetilde{M}_{t}^{n, \epsilon}) + \sum_{t \leq t_{i} < T} \left\{( V^{n, \epsilon}_{t_{i}+} - \widetilde{Y}^{n, \epsilon}_{t_{i}+})g(\widetilde{Y}^{n, \epsilon}_{t_{i}+}) + \epsilon \right\} \\
	&\leq (M_{T}^{n, \epsilon} - M_{t}^{n, \epsilon}) - (\widetilde{M}_{T}^{n, \epsilon} - \widetilde{M}_{t}^{n, \epsilon}) + \sum_{t \leq t_{i} < T} \epsilon.
\end{align*}
Upon taking conditional expectations and noting that the above inequality holds for any $\epsilon > 0$ gives
\begin{equation*}
	\widetilde{Y}_{t}^{n, \epsilon} \leq V_{t}^{n, \epsilon} \leq V_{t}^{n}. 
\end{equation*}
But we have $\widetilde{Y}_{t}^{n} \leq V_{t}^{n} \leq \esssup_{\Gamma \in \Pi} \mathbb{E}[P_T\mathcal{E}_{t,T}(-\Gamma_-) + \int_{[t,T[} P_s \mathcal{E}_{t,s}(-\Gamma_-) d\Gamma_s|\cF_t]$. Thus, taking limit as $n \rightarrow \infty$ yields
\begin{equation*}
	V_{t} \leq \esssup_{\Gamma \in \Pi} \mathbb{E}[P_T\mathcal{E}_{t,T}(-\Gamma_-) + \int_{[t,T[} P_s \mathcal{E}_{t,s}(-\Gamma_-) d\Gamma_s|\cF_t].
\end{equation*}

\subsection{Proofs for Bermudan game options}
We first give some auxiliary results that will be useful for the proof of Lemma \ref{prop: game_est}.
\bl \label{lem: game_est_G_H_diff}
    For $t \in [0,T)$, we have
    \begin{equation} \label{eq: game_est_G_H_diff_eq1}
        V_{t}^{n+1, n+1} - V_{t}^{n, n+1} \leq \sum_{t \leq t_{i} < T} \EE[ (V_{t_{i}+}^{n,n} - V_{t_{i}+}^{n,n+1}) + (V_{t_{i}+}^{n,n} - V_{t_{i}+}^{n-1, n}) \, | \, \cF_{t}],
    \end{equation}
    and
    \begin{equation} \label{eq: game_est_G_H_diff_eq2}
        V_{t}^{n, n} - V_{t}^{n, n+1} \leq \sum_{t \leq t_{i} < T} \EE[(V_{t_{i}+}^{n, n} - V_{t_{i}+}^{n-1, n}) + (V_{t_{i}+}^{n-1,n-1} - V_{t_{i}+}^{n-1, n}) \, | \, \cF_{t}]
    \end{equation}
    and, for the right continuous modifications, we have
    \begin{equation} \label{eq: game_est_G_H_diff_eq3}
        V_{t+}^{n+1, n+1} - V_{t+}^{n, n+1} \leq \sum_{t < t_{i} < T} \EE[ (V_{t_{i}+}^{n,n} - V_{t_{i}+}^{n,n+1}) + (V_{t_{i}+}^{n,n} - V_{t_{i}+}^{n-1, n}) \, | \, \cF_{t}],
    \end{equation}
    and
    \begin{equation} \label{eq: game_est_G_H_diff_eq4}
        V_{t+}^{n, n} - V_{t+}^{n, n+1} \leq \sum_{t < t_{i} < T} \EE[(V_{t_{i}+}^{n, n} - V_{t_{i}+}^{n-1, n}) + (V_{t_{i}+}^{n-1,n-1} - V_{t_{i}+}^{n-1, n}) \, | \, \cF_{t}].
    \end{equation}
\el
\begin{proof}
\emph{Step 1: }    We first show \eqref{eq: game_est_G_H_diff_eq1}. Note that
    \begin{align*}
        V_{t}^{n+1, n+1} - V_{t}^{n, n+1}&= -\left[ \left(M_{T}^{n+1, n+1} + M_{T}^{n, n+1}\right) - \left(M_{t}^{n+1, n+1} + M_{t}^{n, n+1}\right)\right] \\
    &\hspace{1em} + \sum_{t \leq t_{i} < T} G(t_{i}, V_{t_{i}+}^{n+1, n+1}, \underline{\pi}^{n+1}_{t_{i}}) - G(t_{i}, V_{t_{i}+}^{n, n}, \underline{\pi}^{n}_{t_{i}}).
\end{align*}
Since $x \mapsto \Phi(x)$ is increasing and $\Phi$ is Lipschitz continuous we observe that the summand satisfies
\begin{align*}
     & G(t_{i}, V_{t_{i}+}^{n+1, n+1}, \underline{\pi}^{n+1}_{t_{i}}) - G(t_{i}, V_{t_{i}+}^{n, n}, \underline{\pi}^{n}_{t_{i}}) \\
     &= \lambda \Phi \left(\frac{P_{t_{i}} - V_{t_{i}+}^{n, n+1}}{\lambda}\right) - \lambda \Phi \left(\frac{P_{t_{i}} - V_{t_{i}+}^{n-1, n}}{\lambda}\right) + (V_{t_{i}+}^{n, n+1} - V_{t_{i}+}^{n+1, n+1})\mu_{\underline{\pi}_{t_{i}}^{n+1}} - (V_{t_{i}+}^{n-1, n} - V_{t_{i}+}^{n, n})\mu_{\underline{\pi}_{t_{i}}^{n}} \\
     &\leq \lambda \Phi \left(\frac{P_{t_{i}} - V_{t_{i}+}^{n, n+1}}{\lambda}\right) - \lambda \Phi \left(\frac{P_{t_{i}} - V_{t_{i}+}^{n, n}}{\lambda}\right) + (V_{t_{i}+}^{n,n} - V_{t_{i}+}^{n-1, n}) \\
     &\leq (V_{t_{i}+}^{n,n} - V_{t_{i}+}^{n,n+1}) + (V_{t_{i}+}^{n,n} - V_{t_{i}+}^{n-1, n}).
\end{align*}
It follows by taking conditional expectation with respect to $\cF_{t}$ that
\begin{equation*}
    V_{t}^{n+1, n+1} - V_{t}^{n, n+1} \leq \sum_{t \leq t_{i} < T} \EE[ (V_{t_{i}+}^{n,n} - V_{t_{i}+}^{n,n+1}) + (V_{t_{i}+}^{n,n} - V_{t_{i}+}^{n-1, n}) \, | \, \cF_{t}].
\end{equation*}
By repeating the above for the right-continuous modification, we also obtain \eqref{eq: game_est_G_H_diff_eq3}.\\

\noindent \emph{Step 2:} We now show \eqref{eq: game_est_G_H_diff_eq4}. As above, we note that
\begin{align*}
    V_{t+}^{n, n} - V_{t+}^{n, n+1} &= -\left[ \left(M_{T}^{n, n} + M_{T}^{n, n+1}\right) - \left(M_{t}^{n, n} + M_{t}^{n, n+1}\right)\right] \\
    &\hspace{1em} + \sum_{t < t_{i} < T} H(t_{i}, V_{t_{i}+}^{n, n+1}, \overline{\pi}^{n+1}_{t_{i}}) - H(t_{i}, V_{t_{i}+}^{n-1, n}, \overline{\pi}^{n}_{t_{i}}),
\end{align*}
and, since $x \mapsto \Phi(x)$ is increasing and $\Phi$ is Lipschitz continuous, the summand satisfies 
\begin{align*}
& H(t_{i}, V_{t_{i}+}^{n, n+1}, \overline{\pi}^{n+1}_{t_{i}}) -H(t_{i}, V_{t_{i}+}^{n-1, n}, \overline{\pi}^{n}_{t_{i}}) \\
& = \lambda \Phi\left(\frac{{V_{t_{i}+}^{n, n} - R_{t_{i}}}}{\lambda}\right) - \lambda \Phi\left(\frac{{V_{t_{i}+}^{n-1, n-1}  - R_{t_{i}}}}{\lambda}\right) + (V_{t_{i}+}^{n, n+1} - V_{t_{i}+}^{n, n}) \mu_{\overline{\pi}^{n+1}} - (V_{t_{i}+}^{n-1,n} - V_{t_{i}+}^{n-1, n-1}) \mu_{\overline{\pi}^{n}} \\
& \leq (V_{t_{i}+}^{n, n} - V_{t_{i}+}^{n-1, n}) + (V_{t_{i}+}^{n-1,n-1} - V_{t_{i}+}^{n-1, n}). 
\end{align*}
Then, taking conditional expectation with respect to $\cF_{t}$ gives
\begin{equation*}
    V_{t+}^{n, n} - V_{t+}^{n, n+1} \leq \sum_{t < t_{i} < T} \EE[(V_{t_{i}+}^{n, n} - V_{t_{i}+}^{n-1, n}) + (V_{t_{i}+}^{n-1,n-1} - V_{t_{i}+}^{n-1, n}) \, | \, \cF_{t}].
\end{equation*}
An analogous argument also shows \eqref{eq: game_est_G_H_diff_eq2}.
\end{proof}

\begin{proof}[{\bf Proof of Lemma \ref{prop: game_est}}]
We will proceed using an induction argument. From Lemma \ref{lem: game_est_G_H_diff}, we get
    \begin{align*}
        V_{t}^{n+1, n+1} - V_{t}^{n, n+1} &\leq \sum_{t \leq t_{i} < T} \EE[ (V_{t_{i}+}^{n,n} - V_{t_{i}+}^{n,n+1}) + (V_{t_{i}+}^{n,n} - V_{t_{i}+}^{n-1, n}) \, | \, \cF_{t}] \\ 
        &\leq \sum_{t \leq t_{i} < T} \sum_{t_{i} < t_{i}^{1} < T} \EE[ (V_{t_{i}^{1}+}^{n,n} - V_{t_{i}^{1}+}^{n-1,n}) + (V_{t_{i}^{1}+}^{n-1,n-1} - V_{t_{i}^{1}+}^{n-1, n}) \, | \, \cF_{t}] \\
        &\hspace{1em} + \sum_{t \leq t_{i} < T} \sum_{t_{i} < t_{i}^{1} < T} \EE[ (V_{t_{i}^{1}+}^{n-1,n-1} - V_{t_{i}^{1}+}^{n-1,n}) + (V_{t_{i}^{1}+}^{n-1,n-1} - V_{t_{i}^{1}+}^{n-2, n-1}) \, | \, \cF_{t}] \\
        &= \sum_{t \leq t_{i} < T} \sum_{t_{i} < t_{i}^{1} < T} \EE[ V_{t_{i}^{1}+}^{n,n} - V_{t_{i}^{1}+}^{n-1,n} \, | \, \cF_{t}]   + 2 \sum_{t \leq t_{i} < T} \sum_{t_{i} < t_{i}^{1} < T} \EE[ V_{t_{i}^{1}+}^{n-1,n-1} - V_{t_{i}^{1}+}^{n-1, n} \, | \, \cF_{t}] \\
        &\hspace{1em} + \sum_{t \leq t_{i} < T} \sum_{t_{i} < t_{i}^{1} < T} \EE[V_{t_{i}^{1}+}^{n-1,n-1} - V_{t_{i}^{1}+}^{n-2, n-1} \, | \, \cF_{t}],
    \end{align*}
where the second inequality follows from the tower property. Now, let us assume that
    \begin{align*}
    V_{t}^{n+1, n+1} - V_{t}^{n, n+1} 
    &\leq \sum_{t \leq t_{i} < t_{i}^{1} < \hdots < t_{i}^{2k-1} < T} \sum_{j=0}^{k}\binom{2k}{2j}\EE[V_{t_{i}^{2k-1}+}^{n+1-2k+j, n+1-2k+j} - V_{t_{i}^{2k-1}+}^{n-2k+j, n+1-2k+j}\, | \, \cF_{t}] \\
    &\hspace{1em} + \sum_{t \leq t_{i} < t_{i}^{1} < \hdots < t_{i}^{2k-1} < T} \sum_{j=0}^{k-1}\binom{2k}{2j+1}\EE[V_{t_{i}^{2k-1}+}^{n+1-2k+j, n+1-2k+j} - V_{t_{i}^{2k-1}+}^{n+1-2k+j, n+2-2k+j}\, | \, \cF_{t}].
    \end{align*}
    Then, by Lemma \ref{lem: game_est_G_H_diff} applied to each summand and the tower property, we get
    \begin{align*}
    V_{t}^{n+1, n+1} - & V_{t}^{n, n+1} 
    \leq  \sum_{t \leq t_{i} < t_{i}^{1} < \hdots < t_{i}^{2k-1} < T} \sum_{j=0}^{k}\binom{2k}{2j} \sum_{t_{i}^{2k-1}<t_{i}^{2k} < T}\EE[V_{t_{i}^{2k}+}^{n-2k+j, n-2k+j} - V_{t_{i}^{2k}+}^{n-2k+j, n+1-2k+j}\, | \, \cF_{t}] \\
    &\hspace{1em} + \sum_{t \leq t_{i} < t_{i}^{1} < \hdots < t_{i}^{2k-1} < T} \sum_{j=0}^{k}\binom{2k}{2j} \sum_{t_{i}^{2k-1}<t_{i}^{2k} < T}\EE[V_{t_{i}^{2k}+}^{n-2k+j, n-2k+j} - V_{t_{i}^{2k}+}^{n-1-2k+j, n-2k+j}\, | \, \cF_{t}] \\
    &\hspace{1em} + \sum_{t \leq t_{i} < t_{i}^{1} < \hdots < t_{i}^{2k-1} < T} \sum_{j=0}^{k-1}\binom{2k}{2j+1} \sum_{t_{i}^{2k-1}<t_{i}^{2k} < T}\EE[V_{t_{i}^{2k}+}^{n+1-2k+j, n+1-2k+j} - V_{t_{i}^{2k}+}^{n-2k+j, n+1-2k+j}\, | \, \cF_{t}] \\
    &\hspace{1em} + \sum_{t \leq t_{i} < t_{i}^{1} < \hdots < t_{i}^{2k-1} < T} \sum_{j=0}^{k-1}\binom{2k}{2j+1} \sum_{t_{i}^{2k-1}<t_{i}^{2k} < T}\EE[V_{t_{i}^{2k}+}^{n-2k+j, n-2k+j} - V_{t_{i}^{2k}+}^{n-2k+j, n+1-2k+j}\, | \, \cF_{t}].
    \end{align*}
    Then, by grouping the appropriate terms, we see that
    \begin{align*}
    V_{t}^{n+1, n+1} -&  V_{t}^{n, n+1} 
    \leq \sum_{t \leq t_{i} < t_{i}^{1} < \hdots < t_{i}^{2k} < T}\EE[V_{t_{i}^{2k}+}^{n-k, n-k} - V_{t_{i}^{2k}+}^{n-k, n+1-k}\, | \, \cF_{t}] \\ 
    &\hspace{1em} + \sum_{t \leq t_{i} < t_{i}^{1} < \hdots < t_{i}^{2k} < T} \sum_{j=0}^{k-1}\left[\binom{2k}{2j} + \binom{2k}{2j+1}\right]\EE[V_{t_{i}^{2k}+}^{n-2k+j, n-2k+j} - V_{t_{i}^{2k}+}^{n-2k+j, n+1-2k+j}\, | \, \cF_{t}]  \\
    &\hspace{1em }+ \sum_{t \leq t_{i} < t_{i}^{1} < \hdots < t_{i}^{2k} < T}\EE[V_{t_{i}^{2k}+}^{n-k, n-k} - V_{t_{i}^{2k}+}^{n-1-k, n-k}\, | \, \cF_{t}] \\
    &\hspace{1em} + \sum_{t \leq t_{i} < t_{i}^{1} < \hdots < t_{i}^{2k} < T} \sum_{j=0}^{k-1}\binom{2k}{2j} \EE[V_{t_{i}^{2k}+}^{n-2k+j, n-2k+j} - V_{t_{i}^{2k}+}^{n-1-2k+j, n-2k+j}\, | \, \cF_{t}] \\
    &\hspace{1em} + \sum_{t \leq t_{i} < t_{i}^{1} < \hdots < t_{i}^{2k} < T} \sum_{j=0}^{k-1}\binom{2k}{2j+1} \EE[V_{t_{i}^{2k}+}^{n+1-2k+j, n+1-2k+j} - V_{t_{i}^{2k}+}^{n-2k+j, n+1-2k+j}\, | \, \cF_{t}],
    \end{align*}
    whereby using the binomial identity and a change of index in the final summation, that is $j \leftarrow j + 1$, we obtain the following
    \begin{align*}
    & V_{t}^{n+1, n+1} - V_{t}^{n, n+1}\\
    &\leq \sum_{t \leq t_{i} < t_{i}^{1} < \hdots < t_{i}^{2k} < T} \sum_{j=0}^{k}\binom{2k+1}{2j+1}\EE[V_{t_{i}^{2k}+}^{n-2k+j, n-2k+j} - V_{t_{i}^{2k}+}^{n-2k+j, n+1-2k+j}\, | \, \cF_{t}] \\
    &\hspace{1em} + \sum_{t \leq t_{i} < t_{i}^{1} < \hdots < t_{i}^{2k} < T} \EE[V_{t_{i}^{2k}+}^{n-2k, n-2k} - V_{t_{i}^{2k}+}^{n-1-2k, n-2k}\, | \, \cF_{t}]\\
    &\hspace{1em} + \Bigg\{ \sum_{t \leq t_{i} < t_{i}^{1} < \hdots < t_{i}^{2k} < T} \sum_{j=1}^{k-1}\binom{2k}{2j} \EE[V_{t_{i}^{2k}+}^{n-2k+j, n-2k+j} - V_{t_{i}^{2k}+}^{n-1-2k+j, n-2k+j}\, | \, \cF_{t}] \\
    &\hspace{1em} + \sum_{t \leq t_{i} < t_{i}^{1} < \hdots < t_{i}^{2k} < T} \sum_{j=1}^{k-1}\binom{2k}{2j-1} \EE[V_{t_{i}^{2k}+}^{n-2k+j, n-2k+j} - V_{t_{i}^{2k}+}^{n-1-2k+j, n-2k+j}\, | \, \cF_{t}] \Bigg \}\\
    &\hspace{1em} + \Bigg\{\sum_{t \leq t_{i} < t_{i}^{1} < \hdots < t_{i}^{2k} < T} \binom{2k}{2k-1} \EE[V_{t_{i}^{2k}+}^{n-k, n-k} - V_{t_{i}^{2k}+}^{n-1-k, n-k}\, | \, \cF_{t}] \\
    &\hspace{1em} +  \sum_{t \leq t_{i} < t_{i}^{1} < \hdots < t_{i}^{2k} < T}\EE[V_{t_{i}^{2k}+}^{n-k, n-k} - V_{t_{i}^{2k}+}^{n-1-k, n-k}\, | \, \cF_{t}] \Bigg\}.
\end{align*}
Again, by applying the binomial identity, we obtain
\begin{align*}
    & V_{t}^{n+1, n+1} - V_{t}^{n, n+1} \\
    & \leq \sum_{t \leq t_{i} < t_{i}^{1} < \hdots < t_{i}^{2k} < T} \sum_{j=0}^{k}\binom{2k+1}{2j+1}\EE[V_{t_{i}^{2k}+}^{n-2k+j, n-2k+j} - V_{t_{i}^{2k}+}^{n-2k+j, n+1-2k+j}\, | \, \cF_{t}] \\
     &  \hspace{1em} + \sum_{t \leq t_{i} < t_{i}^{1} < \hdots < t_{i}^{2k} < T} \sum_{j=1}^{k-1}\binom{2k+1}{2j} \EE[V_{t_{i}^{2k}+}^{n-2k+j, n-2k+j} - V_{t_{i}^{2k}+}^{n-1-2k+j, n-2k+j}\, | \, \cF_{t}] \\
    &   \hspace{1em} + \sum_{t \leq t_{i} < t_{i}^{1} < \hdots < t_{i}^{2k} < T} \EE[V_{t_{i}^{2k}+}^{n-2k, n-2k} - V_{t_{i}^{2k}+}^{n-1-2k, n-2k}\, | \, \cF_{t}] \\
    &\hspace{1em} + \sum_{t \leq t_{i} < t_{i}^{1} < \hdots < t_{i}^{2k} < T} \binom{2k+1}{2k} \EE[V_{t_{i}^{2k}+}^{n-k, n-k} - V_{t_{i}^{2k}+}^{n-1-k, n-k}\, | \, \cF_{t}],
\end{align*}
and it follows that 
\begin{align} \label{eq: game_est_eq1}
    & V_{t}^{n+1, n+1} - V_{t}^{n, n+1} \nonumber \\
    &\leq \sum_{t \leq t_{i} < t_{i}^{1} < \hdots < t_{i}^{2k} < T} \sum_{j=0}^{k}\binom{2k+1}{2j+1}\EE[V_{t_{i}^{2k}+}^{n-2k+j, n-2k+j} - V_{t_{i}^{2k}+}^{n-2k+j, n+1-2k+j}\, | \, \cF_{t}]  \nonumber \\
    &\hspace{1em} + \sum_{t \leq t_{i} < t_{i}^{1} < \hdots < t_{i}^{2k} < T} \sum_{j=0}^{k}\binom{2k+1}{2j} \EE[V_{t_{i}^{2k}+}^{n-2k+j, n-2k+j} - V_{t_{i}^{2k}+}^{n-1-2k+j, n-2k+j}\, | \, \cF_{t}].
\end{align}
Repeating the above arguments starting from \eqref{eq: game_est_eq1} and, by another application of Lemma \ref{lem: game_est_G_H_diff}, we obtain
\begin{align*}
    &V_{t}^{n+1, n+1} - V_{t}^{n, n+1} \\
    &\leq \sum_{t \leq t_{i} < t_{i}^{1} < \hdots < t_{i}^{2k} < T} \sum_{j=0}^{k}\binom{2k+1}{2j} \sum_{t_{i}^{2k} < t_{i}^{2k+1} < T}\EE[V_{t_{i}^{2k+1}+}^{n-1-2k+j, n-1-2k+j} - V_{t_{i}^{2k+1}+}^{n-1-2k+j, n-2k+j}\, | \, \cF_{t}] \\
    &\hspace{1em} + \sum_{t \leq t_{i} < t_{i}^{1} < \hdots < t_{i}^{2k} < T} \sum_{j=0}^{k}\binom{2k+1}{2j} \sum_{t_{i}^{2k} < t_{i}^{2k+1} < T}\EE[V_{t_{i}^{2k+1}+}^{n-1-2k+j, n-1-2k+j} - V_{t_{i}^{2k+1}+}^{n-2-2k+j, n-1-2k+j}\, | \, \cF_{t}] \\
    &\hspace{1em} + \sum_{t \leq t_{i} < t_{i}^{1} < \hdots < t_{i}^{2k} < T} \sum_{j=0}^{k}\binom{2k+1}{2j+1} \sum_{t_{i}^{2k} < t_{i}^{2k+1} < T } \EE[V_{t_{i}^{2k+1}+}^{n-2k+j, n-2k+j} - V_{t_{i}^{2k+1}+}^{n-1-2k+j, n-2k+j}\, | \, \cF_{t}] \\
    &\hspace{1em} + \sum_{t \leq t_{i} < t_{i}^{1} < \hdots < t_{i}^{2k} < T} \sum_{j=0}^{k}\binom{2k+1}{2j+1} \sum_{t_{i}^{2k} < t_{i}^{2k+1} < T } \EE[V_{t_{i}^{2k+1}+}^{n-1-2k+j, n-1-2k+j} - V_{t_{i}^{2k+1}+}^{n-1-2k+j, n-2k+j}\, | \, \cF_{t}] \\
    &= \sum_{t \leq t_{i} < t_{i}^{1} < \hdots < t_{i}^{2k+1} < T} \sum_{j=0}^{k}\binom{2k+2}{2j+1} \EE[V_{t_{i}^{2k+1}+}^{n-1-2k+j, n-1-2k+j} - V_{t_{i}^{2k+1}+}^{n-1-2k+j, n-2k+j}\, | \, \cF_{t}] \\
    &\hspace{1em} + \sum_{t \leq t_{i} < t_{i}^{1} < \hdots < t_{i}^{2k+1} < T} \sum_{j=1}^{k}\binom{2k+1}{2j} \EE[V_{t_{i}^{2k+1}+}^{n-1-2k+j, n-1-2k+j} - V_{t_{i}^{2k+1}+}^{n-2-2k+j, n-1-2k+j}\, | \, \cF_{t}] \\
    &\hspace{1em} + \sum_{t \leq t_{i} < t_{i}^{1} < \hdots < t_{i}^{2k+1} < T} \sum_{j=1}^{k}\binom{2k+1}{2j-1} \EE[V_{t_{i}^{2k+1}+}^{n-1-2k+j, n-1-2k+j} - V_{t_{i}^{2k+1}+}^{n-2-2k+j, n-1-2k+j}\, | \, \cF_{t}] \\
    &\hspace{1em} + \sum_{t \leq t_{i} < t_{i}^{1} < \hdots < t_{i}^{2k+1} < T}  \EE[V_{t_{i}^{2k+1}+}^{n-k, n-k} - V_{t_{i}^{2k+1}+}^{n-1-k, n-k}\, | \, \cF_{t}]  \\
    &\hspace{1em} + \sum_{t \leq t_{i} < t_{i}^{1} < \hdots < t_{i}^{2k+1} < T}\EE[V_{t_{i}^{2k+1}+}^{n-1-2k, n-1-2k} - V_{t_{i}^{2k+1}+}^{n-2-2k, n-1-2k}\, | \, \cF_{t}] \\
    &= \sum_{t \leq t_{i} < t_{i}^{1} < \hdots < t_{i}^{2k+1} < T} \sum_{j=0}^{k+1}\binom{2k+2}{2j} \EE[V_{t_{i}^{2k+1}+}^{n-1-2k+j, n-1-2k+j} - V_{t_{i}^{2k+1}+}^{n-2-2k+j, n-1-2k+j}\, | \, \cF_{t}] \\ 
    &\hspace{1em} + \sum_{t \leq t_{i} < t_{i}^{1} < \hdots < t_{i}^{2k+1} < T} \sum_{j=0}^{k}\binom{2k+2}{2j+1} \EE[V_{t_{i}^{2k+1}+}^{n-1-2k+j, n-1-2k+j} - V_{t_{i}^{2k+1}+}^{n-1-2k+j, n-2k+j}\, | \, \cF_{t}].
\end{align*}
\vskip-10pt
\end{proof}

\subsection{Proofs of some technical results}\label{sec:technical:results}

\bl\label{lipphi}
The function $\Phi$ defined in \eqref{eq: Psi_Phi_def} satisfies $0 \leq \Phi'(x)\leq 1$. That is, $\Phi$ is Lipschitz continuous. 
\el

\begin{proof}
We first note for all $x \in \RR \backslash \{0\}$ that
\begin{gather*}
\Phi'(x)  = \frac{xe^x - (e^x-1)}{x(e^x-1)}, \quad \text{and } \quad \Phi''(x) =  \frac{e^{2x} - (2+x^2)e^x + 1}{x^2(e^x-1)^2}.
\end{gather*}
We can then deduce
\begin{align*}
\lim_{x\rightarrow +\infty} \Phi'(x) & = \lim_{x\rightarrow \infty} \frac{1 - (x^{-1}-(xe^{x})^{-1})}{(1-e^{-x})} = 1,\\
\lim_{x\rightarrow -\infty} \Phi'(x) & = \lim_{|x|\rightarrow \infty} \frac{e^{-|x|} - (|x|^{-1}e^{-|x|}-|x|^{-1})}{(e^{-|x|}-1)} = 0.
\end{align*}
By the standard inequality 
\begin{equation} \label{eq: cosh_ineq}
    \frac{e^{x} + e^{-x}}{2} = \cosh(x) \geq 1 + \frac{x^{2}}{2} \quad \forall x \in \RR,
\end{equation}
with equality if and only if $x=0$, we have $e^{2x} - (2+x^2)e^x + 1 \geq 0$. Hence, it follows that $\Phi''(x) > 0$ for all $x \in \RR \backslash \{0\}$. Lastly, by l'H\^opital's rule, we deduce that $\Phi'(0) = \frac{1}{2}$ and this proves the claim for all $x \in \RR$. 
\end{proof}

\bl  \label{lem: psi_cdf}
The function $x \mapsto \Psi(x)$ is a cumulative distribution function on $\mathbb{R}$.
\el 
\begin{proof}

\emph{Step 1:} We first observe that 
$$
\lim_{x \rightarrow -\infty}\Psi(x)  = \lim_{x \rightarrow -\infty}\frac{1}{x}\ln \left( \frac{e^{x}-1}{x} \right) = 0 \quad \mbox{ and } \quad \lim_{x \rightarrow +\infty}\Psi(x)    = \lim_{x \rightarrow \infty}\frac{1}{x}\ln \left( \frac{e^{x}-1}{x} \right) = 1.
$$

\noindent \emph{Step 2:} 
We now show that $\Psi$ is increasing. To this end, for $x \in \RR \backslash \{0\}$, consider 
\begin{equation*}
\Psi(x) = \frac{1}{x}\Phi(x),  \quad \text{and }\quad \Psi ' (x) = \frac{x \Phi ' (x) - \Phi(x)}{x^{2}}.
\end{equation*}

Now let $g(x) := x \Phi ' (x) - \Phi(x)$ and its derivative 
\begin{equation*}
    g'(x) = x \Phi''(x) = \frac{e^{2x} - (2+x^{2})e^{x} + 1}{x(e^{x}-1)^{2}}.
\end{equation*}
But by \eqref{eq: cosh_ineq}, it follows that $g'(x) < 0$ for $x < 0$ and $g'(x) > 0$ for $x > 0$. On the other hand, by l'H\^opital's rule we have $\lim_{x \rightarrow 0}g(x) = 0$,
which implies $g(x) > 0$, and therefore $\Psi'(x) > 0$ for all $x \in \RR \backslash \{0\}$. We conclude by noticing that $\Psi$ is continuous on $\RR$. 
\end{proof}



\bcor \label{cor: monotone_driver}
    For any $a \in \RR$, the function $x \mapsto x \Phi(a/x)$ is decreasing for all $x > 0$.     
\ecor
\begin{proof}
    For $a=0$, the function is the constant $0$ and is therefore decreasing. On the other hand, for any $a \in \RR\backslash\{0\}$, letting $f(x) := x \Phi(a/x) = a \Psi(a/x)$ gives $f'(x) = - \frac{a^{2}}{x^{2}} \Psi'(a/x) < 0$ by Lemma \ref{lem: psi_cdf}. 
\end{proof}


The above results guarantee that $\Phi$ is non-positive on $(-\infty, 0)$ and non-negative on $[0,\infty)$ which implies $\Phi(x) \leq x$ for $x\geq0$ since $\Psi$ is a cdf on $\mathbb{R}$. Hence, for any constant $c>0$ and any $x\in \mathbb{R}$, it holds that
\begin{equation}\label{basic:ineq:Phi}
\begin{aligned}
c\Phi(x/c) \leq x^+,
\end{aligned}
\end{equation}
and we have the following lemma.


\bl\label{lemma1.1}
For any $\epsilon \in (0,1)$ and any $c >0$, it holds
\begin{gather}
0\leq x^+ - c\Phi(x/c) \leq \epsilon - c\ln(1-e^{-\epsilon/c})+ c[\ln(|x|)]^+ - c\ln(c). \label{eq1.1}
\end{gather}
\el

\begin{proof}
For simplicity, we set $x'= x/c$ and observe that \eqref{basic:ineq:Phi} gives
\begin{gather*}
0\leq c(x' - \Phi(x')) = x^+ - c\Phi(x/c).
\end{gather*}
On the one hand, for $x> \epsilon$, we have
\begin{align*}
x' - \Phi(x')   & = x'- \ln(e^{x'}- 1) + \ln(x') \\
				& = -\ln\left( 1-e^{-x'}\right) + \ln(x) - \ln(c)\\
			    & \leq -\ln\left( 1-e^{-\frac{\epsilon}{c}}\right) + [\ln(x)]^+ - \ln(c).
\end{align*}
On the other hand, for $0\leq x \leq  \epsilon$, the mean-value theorem together with Lemma \ref{lipphi} and the fact that $\Phi(0)=0$ gives $c(x' - \Phi(x')) = x(1-\Phi'(\tilde{x}))\leq x \leq \epsilon$ for some $\tilde{x} \in  [0, x/c]$. Now, for $x < 0$, we note that
\begin{gather*}
\Phi(x) = \ln \left( \frac{e^{x}-1}{x} \right)  = \ln \left( \frac{1-e^{-|x|}}{|x|} \right)  
\end{gather*}
and, by a similar proof, we obtain
\begin{align*}
0< c\left[ - \Phi(x/c) \right] 
							   & \leq  \epsilon - c\ln(1-e^{-\epsilon/c})+ c[\ln(|x|)]^+ - c\ln(c)
\end{align*}
which concludes the proof.
\end{proof}

\end{document}